\documentclass[11pt,reqno]{amsart}

\usepackage[T1]{fontenc}
\usepackage[english]{babel}
\usepackage{lmodern}
\usepackage{microtype}
\usepackage{mathtools}
\usepackage{amssymb}
\usepackage[colorlinks=true,linkcolor=blue,citecolor=blue,urlcolor=blue]{hyperref}

\newtheorem{theorem}{Theorem}[section]
\newtheorem{lemma}[theorem]{Lemma}

\newtheorem{corollary}[theorem]{Corollary}
\theoremstyle{definition}
\newtheorem{definition}[theorem]{Definition}

\theoremstyle{remark}
\newtheorem{remark}[theorem]{Remark}

\newcommand{\id}{\operatorname{id}}
\newcommand{\cM}{\mathcal{M}}
\newcommand{\R}{\mathbb{R}}
\newcommand{\Sing}{\operatorname{Sing}}
\newcommand{\htop}{h_{\mathrm{top}}}

\newcommand{\huc}{h^{\mathrm{uc}}_{\mathrm{top}}}
\newcommand{\hB}{h^{\mathrm{B}}_{\mathrm{top}}}
\newcommand{\hE}{e^{\mathrm{uc}}_{\mathrm{top}}}

\title[Entropy and periodic-orbit growth]
{Entropy on Regular Sets and Periodic-Orbit Growth for Singular Flows}

\author{C.~A. Morales}
\address{Hangzhou International Innovation Institute of Beihang University,
Hangzhou 311115, China}
\email{tchivatze@gmail.com}

\subjclass[2020]{Primary 37B40; Secondary 37D35, 37C27}
\keywords{Topological entropy, periodic orbit, flow, singularity,
geometric separation}

\begin{document}

\begin{abstract}
We study entropy and periodic-orbit growth for flows on metric spaces.
First, we prove that the finite topological entropy of a flow on a compact
metric space is completely carried by compact subsets of its regular set.
Next, we establish a Bowen--Walters inequality for geometrically separating
flows on possibly noncompact metric spaces, under uniform control at time
zero and dynamical isolation at infinity.  As applications, we obtain the Bowen--Walters inequality for singular
suspension flows over expansive homeomorphisms,
multisingular-hyperbolic sets---extending the upper-bound part of
\cite{pyyz}---and asymptotically sectional-hyperbolic attractors such
as Rovella's \cite{r}.
\end{abstract}

\maketitle

\section{Introduction}

\noindent
Let $\phi=(\phi_t)_{t\in\mathbb R}$
be a flow on a metric space $X$. This means that the joint map $(x,t)\mapsto \phi_t(x)$ is continuous, $\phi_0=\id_X$ and
$\phi_{t+u}=\phi_t\circ\varphi_u$ for all $t,u\in\R$.
Its singular and regular sets are
\[
 \Sing(\phi)
 :=\{x\in X:\phi_t(x)=x\text{ for every }t\in\R\}
 \quad\text{and}\quad
 X^*:=X\setminus\Sing(\phi),
\]
respectively.  The invariant set $X^*$ need not be
compact even if $X$ is.  Thus, the dynamics of a compact flow with singularities naturally
leads to the study of a flow on a noncompact metric space.

This paper has two main purposes.  First, we show that if $X$ is compact, the topological entropy of $\phi$ is the supremum of the continuous-time
separated/spanning entropies of compact subsets of $X^*$.
More precisely, we prove the following result.

\begin{theorem}[Entropy on the regular set]
\label{thmA}
Let $\phi$ be a flow of a compact metric space $X$.
If $e(\phi)<\infty$, then
\[
 e(\phi)
 =
 \sup_{\substack{K\subset X^*\\K\ \mathrm{compact}}}
 \hE(\phi,K).
\]
\end{theorem}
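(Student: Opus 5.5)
The plan is to prove the two inequalities separately: the lower bound is monotonicity, and the upper bound goes through the variational principle and Katok's entropy formula. Throughout I take $e(\phi)$ to be the topological entropy of the flow, which equals $\htop(\phi_1)$. I take $\hE(\phi,K)$ to be the Bowen-type separated/spanning entropy of $K$, computed with the flow dynamical metric $d_T(x,y)=\sup_{0\le t\le T}d(\phi_t x,\phi_t y)$ restricted to $X^*$. If the paper's definitions differ slightly, for instance by time normalisation, only bookkeeping changes.

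\emph{Lower bound.} For every compact $K\subset X^*\subset X$, a $(T,\varepsilon)$-separated subset of $K$ is also one in $X$. Hence $\hE(\phi,K)\le \hE(\phi,X)=e(\phi)$. The last equality holds because $X$ is compact, so uniform continuity is automatic and Bowen's entropy of the whole space is the topological entropy. This gives ``$\ge$''.

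\emph{Upper bound.} Since $e(\phi)<\infty$, the variational principle for $\phi_1$ gives $e(\phi)=\sup_\mu h_\mu(\phi_1)$, where the supremum runs over ergodic $\phi$-invariant probability measures $\mu$. Fix such a $\mu$ and $\gamma>0$. The set $\Sing(\phi)$ is closed and invariant, so by ergodicity either $\mu(\Sing(\phi))=1$ or $\mu(X^*)=1$.
\begin{itemize}
\item If $\mu(\Sing(\phi))=1$, then $\phi_1$ is the identity on the support of $\mu$, so $h_\mu(\phi_1)=0$. This is trivially dominated by the right-hand side, which is $\ge 0$ (take $K=\emptyset$ or a single regular point).
\item If $\mu(X^*)=1$, inner regularity of $\mu$ on the open set $X^*$ yields a compact $K\subset X^*$ with $\mu(K)>1-\delta$, for a fixed $\delta\in(0,1)$.
\end{itemize}

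In the second case, Katok's formula for the ergodic measure $\mu$ and the homeomorphism $\phi_1$ on the compact space $X$ applies. For $\varepsilon$ small and $n$ large, any family of discrete Bowen balls $B_n(x,\varepsilon)=\{y:d(\phi_k x,\phi_k y)<\varepsilon,\ 0\le k<n\}$ covering a set of measure $>1-\delta$ has at least $e^{n(h_\mu(\phi_1)-\gamma)}$ elements.

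Next compare with the flow. The flow ball $\{y: d_n(x,y)<\varepsilon\}$ is contained in $B_n(x,\varepsilon)$, because the supremum over $t\in[0,n]$ dominates the maximum over integer times. Therefore any $(n,\varepsilon)$-spanning set of $K$ for the flow metric, with centres taken in $K$ (halving $\varepsilon$ if necessary), yields a cover of $K$ by discrete Bowen balls. Such a spanning set must therefore have cardinality $\ge e^{n(h_\mu(\phi_1)-\gamma)}$. Passing to maximal separated sets in the usual way, this gives $\hE(\phi,K)\ge h_\mu(\phi_1)-\gamma$. Letting $\gamma\to0$ and taking the supremum over $\mu$ yields ``$\le$''.

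\emph{Main obstacle.} I expect the main work to be matching the precise definition of $\hE(\phi,K)$ with the covering numbers in Katok's formula. Two points need checking:
\begin{itemize}
\item whether $\hE$ uses continuous time $T\to\infty$ rather than integer $n$ (harmless, by monotonicity of $d_T$ in $T$);
\item whether it uses the restricted metric on $X^*$ together with the order of limits in $\varepsilon$ (Katok's bound holds for all sufficiently small $\varepsilon$, so the limit $\varepsilon\to0$ can be taken afterwards).
\end{itemize}
I would handle these first, keeping the spanning sets inside $K$ so that only the metric on $X^*$ is ever used. The hypothesis $e(\phi)<\infty$ is used only so that the variational principle can be applied in the form $e(\phi)=\sup_\mu h_\mu(\phi_1)$, with each ergodic $h_\mu(\phi_1)$ finite, which Katok's estimate needs.
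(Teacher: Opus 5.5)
Your proof is correct, but it reaches the nontrivial inequality $e(\phi)\le\sup_K\hE(\phi,K)$ by a different route from the paper.

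\textbf{The paper's route.} The paper never chooses a compact set by hand. It first proves the noncompact identity $\hB(\phi_1,X^*)=\htop(\phi_1)$ (Lemma~\ref{l7}). This uses the variational principle, the dichotomy of Lemma~\ref{l3}, and the Brin--Katok mass-distribution estimate of Lemma~\ref{l5}. It then passes to compact subsets of $X^*$ with the Feng--Huang approximation theorem for analytic sets \textbf{(P3)}. Finally it uses $\hB(\phi_1,K)\le\hE(\phi,K)$ (Lemma~\ref{l6}).

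\textbf{Your route.} You use inner regularity of $\mu$ on the open set $X^*$ to fix the compact set first. You then bound the flow spanning numbers of $K$ from below by Katok's $\delta$-covering numbers.

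\textbf{What each buys.} Your argument bypasses the heaviest tools: Carath\'eodory-type entropy of a noncompact set, analytic sets, and \textbf{(P3)}. The hypothesis $e(\phi)<\infty$, which the paper uses to invoke \textbf{(P3)}, plays almost no role for you. Unbounded finite values of $h_\mu$ are handled verbatim. Only an ergodic measure of infinite entropy would call for the infinite-entropy form of Katok's lower bound.

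The paper's route gives a formally stronger conclusion: $\sup_K\hB(\phi_1,K)=\htop(\phi_1)$ over compact $K\subset X^*$. This is stronger because Bowen entropy of a compact set can be strictly smaller than its upper capacity entropy, and lower bounds on fixed-length covering numbers do not control $\hB$. Your device still helps there. Choose a compact set of positive measure inside the set $A_N$ from the proof of Lemma~\ref{l5}, taking $Z=X^*$. This recovers that stronger statement without \textbf{(P3)}.

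\textbf{One small correction.} Katok's formula concerns $\phi_1$-ergodic measures, and a flow-ergodic measure need not be $\phi_1$-ergodic. For example, the uniform measure on a periodic orbit of period $1$ is flow-ergodic but not $\phi_1$-ergodic. So take $\mu$ ergodic for $\phi_1$, which is what the variational principle for $\phi_1$ supplies anyway. Since $\Sing(\phi)$ is $\phi_1$-invariant, your dichotomy $\mu(\Sing(\phi))\in\{0,1\}$ is unaffected.
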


Here
\[
 e(\phi):=\htop(\phi_1),
\]
where $\htop(f)$ is the topological entropy of a continuous map $f:X\to X$ (\cite{akm}), while $\hE(\phi,K)$ is the upper-capacity entropy defined directly from orbit segments of the
flow (Definition \ref{UC}).
The identity in this theorem was considered earlier in \cite{rwy}.

Our second result is motivated by the Bowen-Walters inequality
$$
\limsup_{T\to\infty}\frac{1}T\log\nu(T)\leq e(\phi),
$$
where $\nu(T)$ is the number of distinct periodic orbits $\gamma$ with period $\pi(\gamma)\in [0,T]$.
Recall that $x\in X$ is periodic if there is a minimal number $\pi(x)>0$ (called period) such that $\phi_{\pi(x)}(x)=x$. We say that $\gamma$ is a periodic orbit if $\gamma=\phi_\mathbb{R}(x)$ for some periodic point $x$. Its period denoted by $\pi(\gamma)$ is that of $x$.

It was proved on compact metric spaces for expansive flows \cite{bw}, $N$-expansive flows \cite{lms} and asymptotically expansive semiflows \cite{lr}.
Our goal is to extend this inequality to the noncompact case.
This requires explicit substitutes for $e(\phi)$ along with some additional hypotheses.
First we assume that $\phi$ is {\em dynamical isolation at infinity}, that is, there is a compact set meeting every regular orbit.  We also assume that $\phi$ is {\em uniformly $C_0$}, that is,
\[
 \lim_{t\to0}\sup_{x\in X}d(x,\phi_t(x))=0.
\]
This term is motivated by the classical notion of uniformly continuous semigroups of linear operators \cite{p}.
We also replace the
expansivity assumption by {\em geometric separation} \cite{a} and $e(\phi)$ by
\[
 \hE(\phi)
 :=\sup_{\substack{K\subset X\\K\ \mathrm{compact}}}
 \hE(\phi,K)
\]
which is the flow-version of the entropy for continuous maps introduced by Bowen and Dinaburg \cite{b1}, \cite{di}.
Under such definitions we state our second result.

\begin{theorem}[Noncompact Bowen--Walters inequality]
\label{thmB}
Let $\phi$ be a geometrically separating, uniformly $C_0$ flow on a metric
space $X$.  If $\phi$ is dynamically isolated at infinity, then
\begin{equation}
\label{pelus}
 \limsup_{T\to\infty}\frac1T\log\nu(T)
 \leq \hE(\phi).
\end{equation}
\end{theorem}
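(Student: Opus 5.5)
Since every periodic orbit is a regular orbit, dynamical isolation at infinity gives a compact set $K_0\subset X$ meeting every periodic orbit. The plan is to show that, for each $T$, points of distinct periodic orbits of period at most $T$ chosen in $K_0$ form an $(T',\epsilon)$-separated set inside $K_0$, with $T'$ comparable to $T$. Then $\nu(T)$ is bounded by the maximal cardinality $s(T',\epsilon,K_0)$ of such separated sets. Taking $\limsup\frac1T\log$ will give the bound $\hE(\phi,K_0)\le\hE(\phi)$.

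\textbf{Step 1: choose the constants.} By geometric separation (in the sense of \cite{a}), for every $\epsilon>0$ there is $\delta>0$ with the following property. If $x,y$ satisfy $d(\phi_t(x),\phi_t(y))\le\delta$ for all $t$ in a long interval, up to reparametrization close to the identity, then $y$ lies in a short orbit segment $\phi_{[-\epsilon,\epsilon]}(x)$; in particular $y$ is on the orbit of $x$. I would take the version adapted to periodic points: if two periodic orbits $\delta$-shadow each other over a full period, they coincide. Uniform $C_0$ gives $\eta>0$ with $\sup_x d(x,\phi_t(x))<\delta/2$ for $|t|\le\eta$. This uniformity is what lets us compare points along orbits of different periods without compactness of $X$.

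\textbf{Step 2: pick separated representatives.} For each periodic orbit $\gamma$ with $\pi(\gamma)\le T$, pick $x_\gamma\in\gamma\cap K_0$. Suppose two distinct orbits $\gamma\neq\gamma'$ have representatives with $d(\phi_t x_\gamma,\phi_t x_{\gamma'})<\delta$ for all $t\in[0,2T+1]$. Running both points along, the orbit through $x_{\gamma'}$ then $\delta$-tracks $\gamma$ for at least one full period of each orbit. Geometric separation then forces $\gamma=\gamma'$, which is a contradiction. Hence $\{x_\gamma\}$ is $(2T+1,\delta)$-separated in $K_0$.

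One complication is that geometric separation is phrased with reparametrizations, whereas Bowen separation uses the identity time. In this direction there is no problem: closeness with the identity parametrization is a special case. So $\nu(T)\le s(2T+1,\delta,K_0)$.

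\textbf{Step 3: conclude.} This gives
\[
\limsup_{T\to\infty}\frac1T\log\nu(T)\le 2\limsup\frac{1}{2T+1}\log s(2T+1,\delta,K_0).
\]
The factor $2$ must be removed. To do so, use time $T+1$ instead of $2T+1$. For the shadowing over one full period of $\gamma$, we need only that whichever orbit has the smaller period is tracked; geometric separation should apply over the longer period at most $T$. If needed, cover the period classes $[k\eta,(k+1)\eta)$ and apply the argument within each class, which costs only a factor polynomial in $T$. Letting $\delta\to0$ yields at most $\hE(\phi,K_0)\le\hE(\phi)$.

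\textbf{Main obstacle.} I expect Step 2 to be hardest: extracting from the precise definition of geometric separation that $\delta$-closeness over the time window $[0,T+1]$ forces equality of the orbits. This is where the uniform $C_0$ hypothesis has to handle time shifts of size up to $\eta$ uniformly on the noncompact space.
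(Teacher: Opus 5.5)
Your architecture matches the paper's: a compact $K_0$ meeting every periodic orbit, one representative per orbit, and a count by separated sets. But Step 2, which carries the whole proof, is not established and does not work as written.

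Geometric separation is an all-time statement. If $d(\phi_t(x),\phi_{s(t)}(y))<\delta$ for \emph{every} $t\in\R$ and some increasing homeomorphism $s$ with $s(0)=0$, then $y\in\phi_{\R}(x)$. It has no finite-window or $\phi_{[-\epsilon,\epsilon]}$ version, so Step 1 misquotes it. Your remark that identity-time closeness is a special case only applies to closeness for all $t$.

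Identity-time closeness of $x\in\gamma$, $y\in\gamma'$ on $[0,2T+1]$ does not propagate to all times. With $p=\pi(\gamma)\neq q=\pi(\gamma')$ one has $\phi_{np}(x)=x$ but $\phi_{np}(y)=\phi_{n(p-q)}(y)$, so the phases drift apart. You need a reparametrization adapted to both periods, $s(np+u)=nq+\tfrac{q}{p}u$ for $n\in\mathbb{Z}$ and $0\le u\le p$. Assuming $d(\phi_u(x),\phi_u(y))<\varepsilon$ on $[0,p]$, this gives
\[
 d\bigl(\phi_{np+u}(x),\phi_{s(np+u)}(y)\bigr)
 \le d\bigl(\phi_u(x),\phi_u(y)\bigr)+d\bigl(\phi_u(y),\phi_{(q/p)u}(y)\bigr)
 <\varepsilon+\omega(|p-q|),
\]
where $\omega$ is the uniform $C_0$ modulus. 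The right-hand side is below $\delta$ only if two things hold. First, $|p-q|\le\eta$ with $\omega(\eta)$ small. Second, $\varepsilon$ is strictly below the separation constant, e.g.\ $\varepsilon+\omega(\eta)<\delta$.

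So the binning of periods into intervals of length $\eta$ is the missing idea that makes Step 2 true. You offered it only as an optional way to remove a factor $2$. Likewise, separating at scale $\delta$ itself leaves no room for the reparametrization error; you choose $\eta$ in Step 1 but never use it.

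Once this is in place, closeness on $[0,p]\subset[0,T]$ suffices, so the window $[0,2T+1]$ and the factor $2$ disappear. Within each of the $\lceil T/\eta\rceil$ period bins, the representatives are $(T,\varepsilon)$-separated in $K_0$. Hence $\nu(T)\le\lceil T/\eta\rceil\,s_T^\phi(K_0,\varepsilon)$. Since $s_T^\phi(K_0,\varepsilon)$ is nonincreasing in $\varepsilon$, you get $\limsup_{T}T^{-1}\log\nu(T)\le\hE(\phi,K_0)\le\hE(\phi)$ without letting $\delta\to0$. Without the binning, your claim that distinct periodic orbits with arbitrary periods at most $T$ cannot stay $\delta$-close on $[0,2T+1]$ is not justified by the hypotheses.
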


The hypothesis of dynamical isolation at infinity cannot, in general,
be omitted. Indeed, consider the disjoint union of infinitely many
uniformly separated copies of a fixed Anosov flow, embedded in a
Euclidean space. The resulting flow is expansive and satisfies
\[
0<\hE(\phi)<\infty,
\]
but it has infinitely many periodic orbits of period at most $T$ for
every sufficiently large $T$. Thus $\nu(T)=\infty$ for all sufficiently
large $T$, and \eqref{pelus} fails. This construction is given in
Example~1 of \cite{ym1}.
Geometric separation also cannot be replaced by weaker expansivity
conditions such as KH-expansivity or separation, even when the phase
space is compact; see Example~2.10 of \cite{a}. But the
theorem applies to $C$-separating flows \cite{h} since such flows are geometrically separating.
On the other hand, the uniformly $C_0$ hypothesis is essential to prove Theorem \ref{thmB}. Indeed,
it provides uniform control of the spatial displacement caused by the
small time errors arising in the piecewise-linear reparametrizations
used to compare periodic orbits. Without this assumption, points near
infinity may undergo a fixed spatial displacement in arbitrarily short
times, and the periodic-orbit separation argument breaks down. We do
not know whether the conclusion remains valid without this hypothesis. This problem was considered in \cite{ym1}.

The paper is organized as follows.  In Section~\ref{sec2}, we introduce the upper-capacity entropy of compact subsets and compare its continuous-time
definition with the corresponding definition for the time-one map.  In Section \ref{sec4}, we recall the required facts about Bowen entropy of
subsets.  The theorems will be proved in
Section~\ref{sec5}.  Finally, in Section~\ref{sec8}, we apply our results to several
important classes of singular flows. These applications establish the
Bowen--Walters inequality for singular suspensions over expansive
homeomorphisms, for multisingular-hyperbolic sets---thereby extending
the upper-bound result of \cite{pyyz}---and for asymptotically
sectional-hyperbolic attractors, including the Rovella attractor \cite{r}.

\section{Upper Capacity Entropy for Flows}
\label{sec2}

\noindent
In this section, we define the entropy of
a compact set directly in continuous time and compare it with the
corresponding entropy for the time-one map.

Along this section $\varphi$ will denote a flow of a metric space $X$. We do not assume that $X$ is compact unless other stated. We write
$$
\varphi_I(x)=\{\varphi_t(x)\mid t\in I\}
$$
whenever $x\in X$ and $I\subset\mathbb{R}$.

\subsection{Separated and spanning sets for the flow}

For $T>0$, define the dynamical metric
\[
d_T^\varphi(x,y)
:=
\max_{0\leq t\leq T}
d\bigl(\varphi_t(x),\varphi_t(y)\bigr).
\]
Let $K\subset X$ be compact and let $\varepsilon>0$.

\begin{definition}
A set $E\subset K$ is called $(T,\varepsilon)$-separated for the flow
if
\[
d_T^\varphi(x,y)>\varepsilon
\]
for every pair of distinct points $x,y\in E$.
\end{definition}

Define the small-time displacement modulus
\[
 \omega(r)=\sup\bigl\{d(z,\varphi_u(z)):z\in X, |u|\le r\bigr\},
 \qquad r\ge0.
\]
It follows that $\varphi$ is uniformly $C_0$ if and only if
\[
 \lim_{r\downarrow0}\omega(r)=0.
\]
Notice that the flow identity gives
\[
 d\bigl(\varphi_{t+u}(x),\varphi_t(x)\bigr)
 =d\bigl(\varphi_u(\varphi_t(x)),\varphi_t(x)\bigr)\le \omega(|u|).
\]
Thus uniform $C_0$ continuity is precisely uniform equicontinuity in time of
all orbit maps.  It holds automatically when $X$ is compact.
It also holds
when all orbit maps are Lipschitz in time with a common Lipschitz constant.

\begin{definition}
We say that $\delta>0$ is a {\em geometrically separating constant} of $\varphi$ if whenever $x,y\in X$ and an increasing homeomorphism $s\colon\mathbb R\to\mathbb R$ satisfy
$s(0)=0$ and
    \[
    d\bigl(\varphi_t(x),\varphi_{s(t)}(y)\bigr)<\delta
    \qquad\text{for every }t\in\mathbb R,
    \]
    then
    \[
    y\in\varphi_{\mathbb R}(x).
    \]
    We say that $\varphi$ is \emph{geometrically separating} \cite{a} if it has a geometrically separating constant.
\end{definition}

The following lemma will be used to prove Theorem \ref{thmB}.

\begin{lemma}\label{lem:window}
Let $\varphi$ a uniformly $C_0$ flow of a metric space and
$\delta$ be a geometrically separating constant of $\varphi$.  Suppose that $\eta,\varepsilon>0$ satisfy
\begin{equation}\label{eq:scale}
 \varepsilon+\omega(\eta)<\delta.
\end{equation}
Let $T>0$, and let $\Gamma$ be a collection of distinct periodic orbits whose
least periods belong to an interval $I\subset(0,T]$ of length at most $\eta$.
If one point $x_\gamma$ is chosen from each $\gamma\in\Gamma$, then
$\{x_\gamma:\gamma\in\Gamma\}$ is $(T,\varepsilon)$-separated.
\end{lemma}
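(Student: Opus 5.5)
The plan is to argue by contradiction: take two distinct orbits that are not $(T,\varepsilon)$-separated, build an explicit linear reparametrization matching them up to $\delta$ for all times, and let geometric separation force the orbits to coincide. Suppose $\gamma\neq\gamma'$ in $\Gamma$ with chosen points $x=x_\gamma$ and $y=x_{\gamma'}$ satisfy $d_T^\varphi(x,y)\le\varepsilon$. Write $p=\pi(\gamma)$ and $q=\pi(\gamma')$. Both lie in $I\subset(0,T]$, so $0<p,q\le T$ and $|p-q|\le\eta$.

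\emph{Step 1: the reparametrization.} Define $s(t)=(q/p)\,t$. This is an increasing homeomorphism of $\mathbb R$ with $s(0)=0$. It is compatible with the periods, since $s(t+kp)=s(t)+kq$ for every $k\in\mathbb Z$.

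\emph{Step 2: the estimate on one period.} For $r\in[0,p]$ we have $|s(r)-r|=|q-p|\,r/p\le\eta$. Since $r\le p\le T$, the triangle inequality and the displacement modulus $\omega$ give
\[
 d\bigl(\varphi_r(x),\varphi_{s(r)}(y)\bigr)\le d\bigl(\varphi_r(x),\varphi_r(y)\bigr)+d\bigl(\varphi_r(y),\varphi_{s(r)}(y)\bigr)\le \varepsilon+\omega(\eta)<\delta,
\]
where the last inequality is \eqref{eq:scale}.

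\emph{Step 3: extension to all times.} Any $t\in\mathbb R$ can be written as $t=kp+r$ with $k\in\mathbb Z$ and $r\in[0,p)$. By periodicity and Step 1,
\[
 \varphi_t(x)=\varphi_r(x) \quad\text{and}\quad \varphi_{s(t)}(y)=\varphi_{kq+s(r)}(y)=\varphi_{s(r)}(y).
\]
Hence $d\bigl(\varphi_t(x),\varphi_{s(t)}(y)\bigr)<\delta$ for every $t\in\mathbb R$.

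\emph{Step 4: conclusion.} Since $\delta$ is a geometrically separating constant, Step 3 yields $y\in\varphi_{\mathbb R}(x)=\gamma$. Then $\gamma'=\gamma$, a contradiction. Therefore $d_T^\varphi(x_\gamma,x_{\gamma'})>\varepsilon$ for all distinct $\gamma,\gamma'\in\Gamma$. Distinct orbits give distinct chosen points, so the set $\{x_\gamma:\gamma\in\Gamma\}$ is $(T,\varepsilon)$-separated.

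The only delicate point is choosing $s$ so that the time error stays uniformly bounded by $\eta$ over all of $\mathbb R$, not just on $[0,T]$. The linear map handles this because, after reducing modulo the periods, the error never exceeds its value on one period. The uniform $C_0$ hypothesis enters only through $\omega(\eta)$, which converts this time error into a spatial error.
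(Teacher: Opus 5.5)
Your proof is correct and is the paper's argument: the paper's piecewise definition $s(np+u)=nq+\frac{q}{p}u$ is exactly your global linear map $s(t)=\frac{q}{p}t$, and the $\omega(\eta)$ estimate on one period followed by reduction modulo the periods is the same. You also negate separation correctly as $d_T^\varphi(x,y)\le\varepsilon$, where the paper writes a strict $<\varepsilon$, and you let the strict inequality in \eqref{eq:scale} supply the strict bound $<\delta$, which is slightly more careful than the paper.
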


\begin{proof}
Suppose otherwise. Then there are distinct $\gamma,\gamma'\in\Gamma$, with
chosen points $x\in\gamma$ and $y\in\gamma'$, such that
\begin{equation}\label{eq:notsep}
 d(\varphi_u(x),\varphi_u(y))<\varepsilon
 \qquad(0\le u\le T).
\end{equation}
Write $p=\pi(\gamma)$ and $q=\pi(\gamma')$.  Since $p,q\in I$, we have
$|p-q|\le\eta$.

Define $s\colon\R\to\R$ by mapping each interval $[np,(n+1)p]$ linearly
onto $[nq,(n+1)q]$: for $n\in\mathbb Z$ and $0\le u\le p$, put
\[
 s(np+u)=nq+\frac{q}{p}u.
\]
This is an increasing homeomorphism and $s(0)=0$.  Periodicity gives
\[
 \varphi_{np+u}(x)=\varphi_u(x),
 \qquad
 \varphi_{s(np+u)}(y)=\varphi_{(q/p)u}(y).
\]
Moreover,
\[
 \left|\frac{q}{p}u-u\right|
 \le |q-p|\le\eta.
\]
Since $p\le T$, \eqref{eq:notsep}, the triangle inequality, and the definition
of $\omega$ yield
\begin{align*}
 d\bigl(\varphi_{np+u}(x),\varphi_{s(np+u)}(y)\bigr)
 &\le d(\varphi_u(x),\varphi_u(y))
      +d\bigl(\varphi_u(y),\varphi_{(q/p)u}(y)\bigr)\\
 &<\varepsilon+\omega(\eta)<\delta.
\end{align*}
This holds for every $n\in\mathbb Z$ and $u\in[0,p]$, hence for every real
time.  The geometric separating property implies $y\in\varphi_{\R}(x)$, contradicting
that $\gamma$ and $\gamma'$ are distinct periodic orbits.
\end{proof}

\begin{definition}
A set $F\subset X$ is called $(T,\varepsilon)$-spanning for $K$ if,
for every $x\in K$, there exists $y\in F$ such that
\[
d_T^\varphi(x,y)\leq\varepsilon.
\]
\end{definition}

We denote by
\[
s_T^\varphi(K,\varepsilon)\quad\mbox{ and }\quad r_T^\varphi(K,\varepsilon)
\]
the maximal cardinality of a $(T,\varepsilon)$-separated set for
$K$ and
the minimal cardinality of an $(T,\varepsilon)$-spanning set for $K$ respectively.

Since $K$ is compact, these numbers are finite.
The usual maximal-separated-set argument gives
\[
r_T^\varphi(K,\varepsilon)
\leq
s_T^\varphi(K,\varepsilon)
\leq
r_T^\varphi\left(K,\frac{\varepsilon}{2}\right).
\]
Consequently, separated and spanning sets define the same entropy.

\begin{definition}
\label{UC}
The upper capacity entropy of the flow $\varphi$ on the compact set
$K$ is
\[
\hE(\varphi,K)
:=
\lim_{\varepsilon\to0}
\limsup_{T\to\infty}
\frac1T\log s_T^\varphi(K,\varepsilon).
\]
Equivalently,
\[
\hE(\varphi,K)
=
\lim_{\varepsilon\to0}
\limsup_{T\to\infty}
\frac1T\log r_T^\varphi(K,\varepsilon).
\]
\end{definition}

This definition is the flow-version of the one in \cite{b1}.
The term "upper capacity" was coined in \cite{fh}.
The set $K$ is not required to be invariant under the flow.

\subsection{Comparison with the time-one map}

Let $f:X\to X$ be a continuous map.
For $n\in\mathbb{N}$, define the discrete-time dynamical metric
\[
d_n^f(x,y)
:=
\max_{0\leq j<n}
d\bigl(f^j(x),f^j(y)\bigr).
\]
Let
\[
s_n^f(K,\varepsilon)
\qquad\text{and}\qquad
r_n^f(K,\varepsilon)
\]
denote, respectively, the separated and spanning numbers associated
with the metric $d_n^f$ where $K\subset X$.
If $K$ is compact, one has
$$
\lim_{\varepsilon\to0}\limsup_{n\to\infty}\frac{1}n\log s_n^f(K,\varepsilon)=\lim_{\varepsilon\to0}\limsup_{n\to\infty}\frac{1}n\log r_n^f(K,\varepsilon).
$$
The common value is denoted by $\huc(f,K)$.

Unless otherwise specified we set
\[
f:=\varphi_1.
\]
The following lemma is standard and we prove it here for completeness.

\begin{lemma}
\label{l1}
For every compact set $K\subset X$,
\[
\hE(\varphi,K)
=
\huc(f,K).
\]
\end{lemma}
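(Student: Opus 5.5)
The plan is to compare the two separated-set counts directly in both directions, using $f=\varphi_1$ and the trivial comparison between the discrete metric $d_n^f$ and the continuous metric $d_T^\varphi$. Then we pass to $\limsup$ and let $\varepsilon\to0$. Throughout we use that separated and spanning sets give the same entropy, as noted before Definition~\ref{UC}, so we may work with $s$ in both settings.

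\emph{Step 1: $\huc(f,K)\le \hE(\varphi,K)$.} Since $f^j=\varphi_j$, we have $d_n^f(x,y)\le d_{n}^\varphi(x,y)$ for every $n\in\mathbb N$. Hence any $(n,\varepsilon)$-separated set for $f$ in $K$ is $(n,\varepsilon)$-separated for the flow, and so $s_n^f(K,\varepsilon)\le s_n^\varphi(K,\varepsilon)$. Taking $\frac1n\log$, then $\limsup_n$, and noting that the $\limsup$ over integer $T=n$ is at most the $\limsup$ over real $T$, gives the inequality after $\varepsilon\to0$.

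\emph{Step 2: $\hE(\varphi,K)\le\huc(f,K)$.} Fix $\varepsilon>0$. We look for $\delta>0$ such that $d(z,w)\le\delta$ implies $d(\varphi_u z,\varphi_u w)\le\varepsilon$ for all $u\in[0,1]$, whenever $z,w$ range over the points that actually occur, namely $\varphi_j(x)$ and $\varphi_j(y)$ with $x,y\in K$. Granting such a $\delta$, for $T>0$ put $n=\lceil T\rceil+1$. Every $t\in[0,T]$ can be written as $t=j+u$ with $0\le j<n$ and $u\in[0,1]$. It follows that $d_n^f(x,y)\le\delta$ implies $d_T^\varphi(x,y)\le\varepsilon$. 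Contrapositively, a $(T,\varepsilon)$-separated set for the flow is $(n,\delta)$-separated for $f$, so
\[
s_T^\varphi(K,\varepsilon)\le s^f_{\lceil T\rceil+1}(K,\delta).
\]
Since $\frac{\lceil T\rceil+1}{T}\to1$, taking $\limsup_T$ and then $\varepsilon\to0$ (so that $\delta\to0$ as well) gives the reverse inequality.

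\emph{Main obstacle.} The obstacle is producing $\delta$ in Step 2, because $X$ need not be compact. The standard Lebesgue-number argument uses joint continuity of $(z,u)\mapsto\varphi_u(z)$ on a compact set $C\times[0,1]$. It yields $\delta$ such that, for $z\in C$, $w\in X$, $d(z,w)<\delta$ and $u\in[0,1]$, we get $d(\varphi_u z,\varphi_u w)<\varepsilon$. However, the natural choice $C=\varphi_{[0,n]}(K)$ grows with $n$, so $\delta$ is not uniform in $T$.

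I would first try to handle this by pulling everything back to $K$ itself, writing $\varphi_{j+u}=\varphi_j\circ\varphi_u$ instead of $\varphi_u\circ\varphi_j$. We compare $\varphi_u(x)$ with $\varphi_u(y)$ for $x,y\in K$, where continuity on $K\times[0,1]$ is uniform. Then we push forward by $\varphi_j$, and this is where uniform continuity of $\varphi_j$ is again needed. If that fails, I would use the spanning formulation with spanning points chosen inside $K$, at the cost of halving $\varepsilon$. If necessary I would invoke an equicontinuity hypothesis on $\{\varphi_u:u\in[0,1]\}$ along forward orbits of $K$; this holds automatically when $X$ is compact or when the time-one flow maps are uniformly continuous. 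This equicontinuity is the step I expect to require the most care.
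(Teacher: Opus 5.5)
Your Steps 1 and 2 are the paper's argument. The paper phrases it with spanning sets, via $r_n^f\le r_n^\varphi$ and $r_n^\varphi(K,\varepsilon)\le r_{n+1}^f(K,\delta)$, instead of separated sets; this difference is immaterial. Your treatment of real $T$ via $n=\lceil T\rceil+1$ is, if anything, slightly more careful.

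The obstacle you isolate is genuine, and the paper does not overcome it. It obtains $\delta$ from uniform continuity of $(u,x)\mapsto\varphi_u(x)$ on $[0,1]\times X$, calling this ``a compact space''. So it too tacitly assumes $X$ compact, which is the only setting where the lemma is used (Theorem~\ref{thmA}, Lemmas~\ref{l2} and~\ref{l6}; the proof of Theorem~\ref{thmB} does not use it). With that assumption your proof is complete.

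Your fallback ideas cannot remove the assumption, because the statement is false for noncompact $X$. Take the following data:
\begin{itemize}
\item a flow $\theta$ on a compact metric space $Y$ with $\htop(\theta_1)>0$;
\item $\tau\colon\R\to\R$ vanishing on $(-\infty,1]$ and, for each integer $n\ge1$, linear on $[n,n+\tfrac12]$ and on $[n+\tfrac12,n+1]$, with $\tau(n)=\tau(n+1)=0$ and $\tau(n+\tfrac12)=n$;
\item $X=\R\times Y$ with the sum metric, and
\[
\varphi_t(r,y)=\bigl(r+t,\ \theta_{\tau(r+t)-\tau(r)}(y)\bigr).
\]
\end{itemize}
This is a flow. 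Moreover $\varphi_1(r,y)=(r+1,\theta_{\sigma(r)}(y))$ with $\sigma(r)=\tau(r+1)-\tau(r)\in[0,1]$ and $\sigma$ $2$-Lipschitz, so even $\varphi_1$ is uniformly continuous.

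For $K=\{0\}\times Y$ one has $f^j(0,y)=(j,y)$. Hence $d_n^f$ restricted to $K$ is just $d_Y$, and $\huc(f,K)=0$. On the other hand $\varphi_{j+1/2}(0,y)=(j+\tfrac12,\theta_j(y))$. So $\{0\}\times E$ is $(T,\varepsilon)$-separated for the flow whenever $E$ is $(\lfloor T+\tfrac12\rfloor,\varepsilon)$-separated for $\theta_1$, which gives $\hE(\varphi,K)\ge\htop(\theta_1)>0$.

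So an extra hypothesis is unavoidable, and it must be your last option in its strong form. What you need is uniform equicontinuity of the whole family $\{\varphi_u\}_{0\le u\le1}$ on $\bigcup_{j\ge0}\varphi_j(K)$, which compactness of $X$ supplies. Uniform continuity of $\varphi_1$ alone does not suffice, as the example shows.
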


\begin{proof}
We give the proof using spanning sets.
Since the integer times
\[
0,1,\ldots,n-1
\]
belong to the interval $[0,n]$, one has
\[
d_n^f(x,y)\leq d_n^\varphi(x,y).
\]
Therefore every $(n,\varepsilon)$-spanning set for the flow is also
$(n,\varepsilon)$-spanning for the time-one map. Hence
\[
r_n^f(K,\varepsilon)
\leq
r_n^\varphi(K,\varepsilon).
\]
It follows that
\[
\lim_{\varepsilon\to0}
\limsup_{n\to\infty}
\frac1n\log r_n^f(K,\varepsilon)
\leq
\huc(\varphi,K).
\]

We now prove the reverse inequality. Fix $\varepsilon>0$. Since the
map
\[
[0,1]\times X\longrightarrow X,
\qquad
(t,x)\longmapsto\varphi_t(x),
\]
is continuous on a compact space, it is uniformly continuous.
Therefore there exists $\delta=\delta(\varepsilon)>0$ such that
\[
d(x,y)<\delta
\quad\Longrightarrow\quad
d\bigl(\varphi_u(x),\varphi_u(y)\bigr)<\varepsilon
\]
for every $u\in[0,1]$. We may choose
\[
0<\delta(\varepsilon)\leq\varepsilon,
\]
so that $\delta(\varepsilon)\to0$ as $\varepsilon\to0$.

Suppose that
\[
d_{n+1}^f(x,y)<\delta.
\]
Let $t\in[0,n]$. Write
\[
t=j+u,
\]
where
\[
j\in\{0,\ldots,n-1\},
\qquad
u\in[0,1].
\]
Then
\[
\varphi_t(x)=\varphi_u(f^j(x))
\]
and
\[
\varphi_t(y)=\varphi_u(f^j(y)).
\]
Since
\[
d\bigl(f^j(x),f^j(y)\bigr)<\delta,
\]
uniform continuity gives
\[
d\bigl(\varphi_t(x),\varphi_t(y)\bigr)<\varepsilon.
\]
Taking the maximum over $t\in[0,n]$, we obtain
\[
d_n^\varphi(x,y)<\varepsilon.
\]

Thus every $(n+1,\delta)$-spanning set for $K$ with respect to the
time-one map is $(n,\varepsilon)$-spanning for $K$ with respect to the
flow. Consequently,
\[
r_n^\varphi(K,\varepsilon)
\leq
r_{n+1}^f(K,\delta).
\]
Therefore
\[
\begin{aligned}
\limsup_{n\to\infty}
\frac1n\log r_n^\varphi(K,\varepsilon)
&\leq
\limsup_{n\to\infty}
\frac1n\log r_{n+1}^f(K,\delta)\\
&=
\limsup_{n\to\infty}
\frac1n\log r_n^f(K,\delta).
\end{aligned}
\]
Letting $\varepsilon\to0$, and hence
$\delta(\varepsilon)\to0$, gives
\[
\hE(\varphi,K)
\leq
\lim_{\delta\to0}
\limsup_{n\to\infty}
\frac1n\log r_n^f(K,\delta).
\]
Together with the opposite inequality, this proves
\[
\hE(\varphi,K)
=
\lim_{\varepsilon\to0}
\limsup_{n\to\infty}
\frac1n\log r_n^f(K,\varepsilon).
\]

Finally, the standard inequalities
\[
r_n^f(K,\varepsilon)
\leq
s_n^f(K,\varepsilon)
\leq
r_n^f\left(K,\frac{\varepsilon}{2}\right)
\]
show that the separated-set formula gives the same entropy.
\end{proof}

\begin{remark}
If $X$ is compact we can take $K=X$ so Lemma~\ref{l1} gives
\[
e(\varphi)=\hE(\varphi,X)=\hE(\phi).
\]
More generally, Lemma~\ref{l1} allows us to use
the time-one map in the auxiliary Bowen-entropy argument while retaining
the continuous-time definition of $\hE(\varphi,K)$ in the statement of
the main theorem.
\end{remark}

\begin{lemma}[Monotonicity]
\label{l2}
If $K_1\subset K_2$ are compact subsets of $X$, then
\[
\hE(\varphi,K_1)\leq\hE(\varphi,K_2).
\]
In particular, for every compact set $K\subset X$,
\[
\hE(\varphi,K)\leq e(\varphi).
\]
\end{lemma}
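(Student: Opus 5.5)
The first inequality comes from comparing separated sets directly. Let $E\subset K_1$ be a $(T,\varepsilon)$-separated set for the flow. Separation is a condition on pairs of points of $E$ and refers to $K_1$ only through the requirement $E\subset K_1$. Since $K_1\subset K_2$, the same $E$ is a $(T,\varepsilon)$-separated subset of $K_2$. Therefore
\[
 s_T^\varphi(K_1,\varepsilon)\le s_T^\varphi(K_2,\varepsilon)
\]
for every $T>0$ and every $\varepsilon>0$. Taking $\frac1T\log$, then $\limsup_{T\to\infty}$, and finally $\varepsilon\to0$ preserves this inequality. Using the separated-set form of Definition~\ref{UC}, this gives $\hE(\varphi,K_1)\le\hE(\varphi,K_2)$.

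For the second claim, $e(\varphi)=\htop(\varphi_1)$ only makes sense for a flow on a compact metric space, so I will assume $X$ is compact. Then any compact $K\subset X$ satisfies $K\subset X$, and $X$ is itself compact. The first part gives $\hE(\varphi,K)\le\hE(\varphi,X)$.

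It remains to identify $\hE(\varphi,X)$ with $e(\varphi)$. The remark after Lemma~\ref{l1} does this: Lemma~\ref{l1} applied with $K=X$ gives $\hE(\varphi,X)=\huc(f,X)$, where $f=\varphi_1$. By the Bowen--Dinaburg characterization of topological entropy on compact spaces, $\huc(f,X)=\htop(f)=e(\varphi)$.

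Each step is immediate. The only point needing care is the identification of $e(\varphi)$ with the Bowen--Dinaburg entropy of the time-one map. The remark after Lemma~\ref{l1} supplies this, so I expect no real obstacle.
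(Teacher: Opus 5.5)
Your proof is correct, but for the first assertion it takes a different route from the paper. The paper proves the lemma in one line by passing to the time-one map. Lemma~\ref{l1} converts $\hE(\varphi,K_i)$ into $\huc(\varphi_1,K_i)$, and Proposition~2.1 of Feng--Huang then supplies both monotonicity of $\huc(f,\cdot)$ and the identity $\huc(f,X)=\htop(f)$.

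You instead compare separated sets directly in continuous time, obtaining $s_T^\varphi(K_1,\varepsilon)\le s_T^\varphi(K_2,\varepsilon)$ for every $T$ and $\varepsilon$. You then invoke Lemma~\ref{l1} only once, with $K=X$ and $X$ compact, for the second assertion.

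Your route is more elementary, and it is valid for an arbitrary metric space $X$. The paper's argument for monotonicity tacitly needs $X$ compact, for two reasons:
\begin{itemize}
\item its proof of Lemma~\ref{l1} uses uniform continuity of $(t,x)\mapsto\varphi_t(x)$ on $[0,1]\times X$, i.e.\ compactness of $X$;
\item Feng--Huang work on compact spaces.
\end{itemize}
This matters because Section~\ref{sec2} explicitly allows noncompact $X$. In fact the paper itself uses your direct separated-set argument in the proof of Corollary~\ref{thm:A}.

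The paper's route is shorter given the citation, and it gets $\hE(\varphi,X)=e(\varphi)$ from the same source. Your second step has essentially the same content: Lemma~\ref{l1} with $K=X$, plus the Bowen--Dinaburg characterization of $\htop(\varphi_1)$ on a compact space. You were also right to note that $e(\varphi)$ only makes sense when $X$ is compact, an assumption the statement leaves implicit.
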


\begin{proof}
The result follows by Lemma \ref{l1} and Proposition 2.1 in \cite{fh} applied to $f=\varphi_1$.
\end{proof}

The set $\Sing(\varphi)$ is closed and invariant under the flow.
Consequently, $X^*$ is open and invariant.
Since $X$ is a compact metric space, it is Polish. Every open subset of
a Polish space is Borel and hence analytic. We shall use this fact when
applying the compact approximation theorem \cite{fh}.

Let $\cM_f(X)$ denote the set of $f$-invariant Borel probability
measures on $X$, and let $\cM_f^e(X)$ denote the subset of ergodic
measures. For $\mu\in\cM_f(X)$, let $h_\mu(f)$ denote the
measure-theoretic entropy \cite{w}.

\begin{lemma}\label{l3}
If $\mu\in\cM_f^e(X)$, then
$
\mu(\Sing(\varphi))\in\{0,1\}.
$
\end{lemma}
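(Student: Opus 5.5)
The plan is to show that $\Sing(\varphi)$ is an $f$-invariant Borel set and then apply ergodicity of $\mu$. Here $f=\varphi_1$ and $\mu\in\cM_f^e(X)$.

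First, $\Sing(\varphi)$ is Borel. For each $t$, the set $\{x: \varphi_t(x)=x\}$ is closed, because $x\mapsto d(x,\varphi_t(x))$ is continuous. Hence $\Sing(\varphi)=\bigcap_{t\in\R}\{x:\varphi_t(x)=x\}$ is an intersection of closed sets, so it is closed and in particular Borel. (It suffices to intersect over rational $t$, by continuity of $t\mapsto\varphi_t(x)$, but closedness does not need this.)

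Second, $\Sing(\varphi)$ is strictly $f$-invariant: $f^{-1}(\Sing(\varphi))=\Sing(\varphi)$. If $x\in\Sing(\varphi)$, then $f(x)=\varphi_1(x)=x\in\Sing(\varphi)$, so $\Sing(\varphi)\subset f^{-1}(\Sing(\varphi))$. Conversely, suppose $f(x)\in\Sing(\varphi)$. Then $f(x)$ is fixed by every $\varphi_t$. Since $\varphi_t$ is a homeomorphism commuting with $f$, we get
\[
\varphi_t(x)=\varphi_{-1}\bigl(\varphi_t(f(x))\bigr)=\varphi_{-1}(f(x))=x .
\]
So $x\in\Sing(\varphi)$.

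Third, ergodicity of $\mu$ with respect to $f$ means every Borel set $A$ with $f^{-1}(A)=A$ has $\mu(A)\in\{0,1\}$. Applying this to $A=\Sing(\varphi)$ gives the claim.

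There is no real obstacle. The only point needing care is that the invariance is exact preimage invariance rather than invariance mod $\mu$, and this uses the invertibility of $f=\varphi_1$ with inverse $\varphi_{-1}$. The flow property makes that immediate.
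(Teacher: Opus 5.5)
Your proof is correct and follows the same route as the paper: $\Sing(\varphi)$ is $f$-invariant, so ergodicity of $\mu$ forces $\mu(\Sing(\varphi))\in\{0,1\}$. You also supply the details the paper leaves implicit, namely that $\Sing(\varphi)$ is closed (hence Borel) and that $f^{-1}(\Sing(\varphi))=\Sing(\varphi)$ holds exactly, using $\varphi_{-1}$.
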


\begin{proof}
The set $\Sing(\varphi)$ is invariant under $f$. The conclusion therefore
follows from the ergodicity of $\mu$.
\end{proof}

\begin{lemma}
\label{l4}
If $\mu\in\cM_f(X)$ satisfies
\[
\mu(\Sing(\varphi))=1,
\]
then
\[
h_\mu(f)=0.
\]
\end{lemma}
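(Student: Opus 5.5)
The plan is to reduce the statement to a zero-entropy computation for the identity map. Set $S=\Sing(\varphi)$. It is closed, hence compact in $X$ (recall that $X$ is compact in this part of the section), and $f$-invariant. On $S$ the map $f=\varphi_1$ restricts to the identity, since every point of $S$ is fixed by every $\varphi_t$.

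Because $\mu(S)=1$, the measure $\mu$ may be regarded as an invariant Borel probability measure $\nu$ for the system $(S,f|_S)=(S,\id_S)$. The first step is to check that $h_\mu(f)=h_\nu(\id_S)$. I would argue directly from the definition of measure-theoretic entropy. For any finite Borel partition $\xi$ of $X$, the trace $\xi|_S$ is a partition of $S$. Since $S$ has full measure and is invariant, the join $\bigvee_{j=0}^{n-1}f^{-j}\xi$ agrees $\mu$-almost everywhere with $\bigvee_{j=0}^{n-1}(f|_S)^{-j}(\xi|_S)$. Their static entropies therefore coincide, and so $h_\mu(f,\xi)=h_\nu(\id_S,\xi|_S)$. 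Conversely, every finite partition of $S$ extends to a partition of $X$ by adjoining $X\setminus S$ to one of its atoms.

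The second step is the computation itself. For the identity map we have $\bigvee_{j=0}^{n-1}\id^{-j}\eta=\eta$ for every finite partition $\eta$. Hence
\[
h_\nu(\id_S,\eta)=\lim_{n\to\infty}\frac1n H_\nu(\eta)=0,
\]
because $H_\nu(\eta)\le\log(\#\eta)<\infty$. Taking the supremum over $\eta$ gives $h_\mu(f)=0$.

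An alternative route, avoiding partitions, is the variational principle: $h_\mu(f)=h_\nu(\id_S)\le\htop(\id_S)=0$, since a $(n,\varepsilon)$-spanning set for the identity can be chosen independently of $n$. However, identifying $h_\mu(f)$ with $h_\nu(\id_S)$ still requires the restriction argument. The only point needing care is that restriction step, that is, the justification that a full-measure invariant set carries all the entropy; I expect it to be routine via the almost-everywhere equality of partitions described above.
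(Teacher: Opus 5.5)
Your proposal is correct and is essentially the paper's argument. The paper also uses that $f=\varphi_1$ is the identity on the full-measure set $\Sing(\varphi)$, so that $\bigvee_{j=0}^{n-1}f^{-j}\mathcal{P}=\mathcal{P}$ modulo $\mu$-null sets, and hence $h_\mu(f,\mathcal{P})=\lim_n \frac1n H_\mu(\mathcal{P})=0$. The only difference is that the paper does this computation directly on $X$ instead of first passing to $(S,\id_S)$, so your restriction step is valid but not needed.
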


\begin{proof}
For every $x\in\Sing(\varphi)$,
\[
f(x)=\varphi_1(x)=x.
\]
Thus $f$ agrees with the identity map on a set of full $\mu$-measure.

Let $\mathcal{P}$ be a finite measurable partition of $X$. Modulo sets
of $\mu$-measure zero,
\[
\bigvee_{j=0}^{n-1}f^{-j}\mathcal{P}
=
\mathcal{P}.
\]
Therefore
\[
h_\mu(f,\mathcal{P})
=
\lim_{n\to\infty}
\frac{1}{n}
H_\mu\left(
\bigvee_{j=0}^{n-1}f^{-j}\mathcal{P}
\right)
=
\lim_{n\to\infty}\frac{1}{n}H_\mu(\mathcal{P})
=
0.
\]
Taking the supremum over all finite measurable partitions gives
$h_\mu(f)=0$.
\end{proof}

\section{An Auxiliary Bowen-Entropy Argument}
\label{sec4}

\subsection{Bowen entropy of a subset}

We recall the Carath\'eodory-style definition of entropy by Bowen \cite{b}.
For $n\in\mathbb{N}$, $x\in X$, and $\varepsilon>0$, let
\[
B_n(x,\varepsilon)
:=
\left\{
y\in X:
d_n(x,y)<\varepsilon
\right\}.
\]
Recall
\[
d_n(x,y)
=
\max_{0\leq j<n}
d\bigl(f^j(x),f^j(y)\bigr).
\]

Let $Z\subset X$, $s\geq0$, $N\in\mathbb{N}$, and
$\varepsilon>0$. Define
\[
\mathcal{M}_{N,\varepsilon}^{s}(f,Z)
:=
\inf
\left\{
\sum_{i}e^{-s n_i}:
Z\subset
\bigcup_i B_{n_i}(x_i,\varepsilon),
\quad
n_i\geq N
\right\}.
\]
The infimum is taken over all finite or countable families of Bowen
balls
\[
\left\{
B_{n_i}(x_i,\varepsilon)
\right\}_i
\]
covering $Z$, with $x_i\in X$ and $n_i\geq N$.

Since
\[
\mathcal{M}_{N,\varepsilon}^{s}(f,Z)
\]
is nondecreasing as $N\to\infty$, the limit
\[
\mathcal{M}_{\varepsilon}^{s}(f,Z)
:=
\lim_{N\to\infty}
\mathcal{M}_{N,\varepsilon}^{s}(f,Z)
\]
exists.

There is a critical value at which
$\mathcal{M}_{\varepsilon}^{s}(f,Z)$ changes from $+\infty$ to $0$.
Define
\[
h_{\mathrm{top}}^{B}(f,Z,\varepsilon)
:=
\inf
\left\{
s\geq0:
\mathcal{M}_{\varepsilon}^{s}(f,Z)=0
\right\}.
\]
Equivalently,
\[
h_{\mathrm{top}}^{B}(f,Z,\varepsilon)
=
\sup
\left\{
s\geq0:
\mathcal{M}_{\varepsilon}^{s}(f,Z)=+\infty
\right\}.
\]

The Bowen topological entropy of $f$ on $Z$ is
\[
\hB(f,Z)
:=
\lim_{\varepsilon\to0}
h_{\mathrm{top}}^{B}(f,Z,\varepsilon).
\]
We use the convention
\[
\hB(f,\varnothing)=0.
\]

We require five standard properties.\\

\noindent
{\bf (P1)}
Bowen entropy is monotone:
\[
Z_1\subset Z_2
\quad\Longrightarrow\quad
\hB(f,Z_1)\leq\hB(f,Z_2).
\]
Moreover,
\[
\hB(f,X)=\htop(f).
\]
See Proposition 2.1 in \cite{fh}.\\

\noindent
{\bf (P2)} We have the following lemma.

\begin{lemma}[Entropy of a full-measure set]
\label{l5}
Let $f:X\to X$ be continuous on a compact metric space, and let
$\mu\in\mathcal{M}_f^e(X)$. If $Z\subset X$ is Borel and
\[
\mu(Z)=1,
\]
then
\[
h_\mu(f)\leq h_{\mathrm{top}}^B(f,Z).
\]
\end{lemma}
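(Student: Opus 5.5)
This is Bowen's theorem (Bowen 1973; Pesin's book, Theorem 20.1 style). I would prove it in the standard way, through Brin--Katok local entropy together with a mass-distribution argument.

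\textbf{Step 1: local entropy.} By the Brin--Katok local entropy formula, for $\mu$-a.e.\ $x$ we have
\[
\lim_{\varepsilon\to0}\liminf_{n\to\infty}-\frac1n\log\mu\bigl(B_n(x,\varepsilon)\bigr)=h_\mu(f),
\]
and the limit in $\varepsilon$ is monotone. We may assume $h_\mu(f)>0$, since otherwise there is nothing to prove. Fix $0<s<h_\mu(f)$. For $\varepsilon>0$ small enough, the set of $x$ with $\liminf_n -\frac1n\log\mu(B_n(x,2\varepsilon))>s$ has positive measure.

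\textbf{Step 2: a set of uniform decay.} Using $\mu(Z)=1$ and countable additivity, choose $N_0$ and a Borel set $A\subset Z$ with $\mu(A)>0$ such that
\[
\mu\bigl(B_n(x,2\varepsilon)\bigr)\le e^{-ns}\qquad\text{for all }x\in A,\ n\ge N_0 .
\]

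\textbf{Step 3: mass distribution.} Take any countable cover $\{B_{n_i}(x_i,\varepsilon)\}$ of $Z$ with $n_i\ge N\ge N_0$. Discard the balls that do not meet $A$. For each remaining ball pick $y_i\in A\cap B_{n_i}(x_i,\varepsilon)$; then $B_{n_i}(x_i,\varepsilon)\subset B_{n_i}(y_i,2\varepsilon)$. Hence
\[
0<\mu(A)\le\sum_i\mu\bigl(B_{n_i}(y_i,2\varepsilon)\bigr)\le\sum_i e^{-sn_i}.
\]
It follows that $\mathcal M^s_{N,\varepsilon}(f,Z)\ge\mu(A)>0$ for all $N\ge N_0$, so $\mathcal M^s_\varepsilon(f,Z)>0$.

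\textbf{Step 4: conclusion.} From Step 3, $h^B_{\mathrm{top}}(f,Z,\varepsilon)\ge s$. Smaller $\varepsilon$ only enlarges the quantity, so letting $\varepsilon\to0$ gives $\hB(f,Z)\ge s$. Letting $s\uparrow h_\mu(f)$ gives the claim.

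\textbf{Main obstacle.} The substantive input is the Brin--Katok lower bound. For continuous (not necessarily invertible) $f$ on a compact metric space, it states that a.e.\ $\liminf_n -\frac1n\log\mu(B_n(x,\varepsilon))$ tends to $h_\mu(f)$ as $\varepsilon\to0$. I would cite it rather than reprove it.

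If a self-contained route is wanted instead, I would argue as follows. Take a partition $\mathcal P$ with boundaries of small measure, and use Shannon--McMillan--Breiman for the cylinders of $\bigvee_{j<n}f^{-j}\mathcal P$. Then observe that a Bowen ball $B_n(x,\varepsilon)$ meets few cylinders, namely $e^{n\cdot o(1)}$ of them, for points whose orbits rarely visit an $\varepsilon$-neighbourhood of $\partial\mathcal P$; this is controlled by the ergodic theorem.

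Ergodicity is what makes the local entropy equal $h_\mu(f)$ a.e. rather than just on average.
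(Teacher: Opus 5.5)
Your proposal is correct and follows essentially the same route as the paper. You use Brin--Katok to obtain a positive-measure set $A\subset Z$ with uniform decay $\mu(B_n(x,2\varepsilon))\le e^{-sn}$ for $n\ge N$, then recentre the Bowen balls of any cover at points of $A$ to get $\mu(A)\le\sum_i e^{-sn_i}$; the only cosmetic difference is that you bound $\mathcal{M}^s_{N,\varepsilon}(f,Z)$ directly from covers of $Z$, whereas the paper bounds $\mathcal{M}^s_{N,\varepsilon}(f,A_N)$ and then appeals to monotonicity in the set.
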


\begin{proof}
Let
\[
0<s<h_\mu(f).
\]
By the Brin--Katok local entropy theorem \cite{bk}, for $\mu$-almost every
$x\in X$,
\[
\lim_{\varepsilon\to0}
\liminf_{n\to\infty}
-\frac1n\log\mu\bigl(B_n(x,\varepsilon)\bigr)
=
h_\mu(f).
\]
Consequently, for sufficiently small $\varepsilon>0$, the set
\[
A
:=
\left\{
x\in Z:
\liminf_{n\to\infty}
-\frac1n\log\mu\bigl(B_n(x,2\varepsilon)\bigr)>s
\right\}
\]
has positive $\mu$-measure.

For $N\in\mathbb{N}$, define
\[
A_N
:=
\left\{
x\in A:
\mu\bigl(B_n(x,2\varepsilon)\bigr)
\leq e^{-sn}
\text{ for every }n\geq N
\right\}.
\]
Since
\[
A=\bigcup_{N=1}^{\infty}A_N,
\]
there exists $N$ such that
\[
\mu(A_N)>0.
\]

Consider any finite or countable cover
\[
A_N\subset\bigcup_i B_{n_i}(y_i,\varepsilon),
\qquad n_i\geq N.
\]
We may discard every ball that does not meet $A_N$. For each remaining
ball, choose
\[
x_i\in A_N\cap B_{n_i}(y_i,\varepsilon).
\]
The triangle inequality for the Bowen metric gives
\[
B_{n_i}(y_i,\varepsilon)
\subset
B_{n_i}(x_i,2\varepsilon).
\]
Since $x_i\in A_N$ and $n_i\geq N$,
\[
\mu\bigl(B_{n_i}(y_i,\varepsilon)\bigr)
\leq
\mu\bigl(B_{n_i}(x_i,2\varepsilon)\bigr)
\leq e^{-s n_i}.
\]
Therefore
\[
\mu(A_N)
\leq
\sum_i\mu\bigl(B_{n_i}(y_i,\varepsilon)\bigr)
\leq
\sum_i e^{-s n_i}.
\]
Taking the infimum over all such covers yields
\[
\mathcal{M}_{N,\varepsilon}^{s}(f,A_N)
\geq\mu(A_N)>0.
\]
It follows that
\[
h_{\mathrm{top}}^B(f,A_N,\varepsilon)\geq s.
\]
Since $A_N\subset Z$, monotonicity gives
\[
h_{\mathrm{top}}^B(f,Z)\geq s.
\]
As this holds for every $s<h_\mu(f)$, we conclude that
\[
h_{\mathrm{top}}^B(f,Z)\geq h_\mu(f).
\]
This completes the proof.
\end{proof}

\noindent
{\bf (P3)}
If $\htop(f)<\infty$ and $Z$ is analytic, then the compact
approximation theorem of Feng and Huang (Item (ii) of Theorem 1.2 in \cite{fh}) gives
\[
\hB(f,Z)
=
\sup_{\substack{K\subset Z\\K\ \mathrm{compact}}}
\hB(f,K).
\]

\noindent
{\bf (P4)}
The following is the standard comparison between the Carath\'eodory-type Bowen
entropy and the separated/spanning entropy of a set.

\begin{lemma}
\label{l6}
For every compact set $K\subset X$,
\[
\hB(f,K)\leq\hE(\varphi,K).
\]
\end{lemma}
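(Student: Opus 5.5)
By Lemma~\ref{l1} it suffices to prove
\[
\hB(f,K)\leq\huc(f,K),
\]
where $f=\varphi_1$. This is the discrete-time comparison between Bowen's Carath\'eodory dimension and upper capacity. I would argue directly with spanning sets, using them to build cheap covers by Bowen balls.

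Fix $\varepsilon>0$ and choose any $s>\limsup_{n\to\infty}\frac1n\log r_n^f(K,\varepsilon/2)$. Pick $s'$ with
\[
\limsup_{n\to\infty}\frac1n\log r_n^f(K,\varepsilon/2)<s'<s.
\]
Then there is $N_0$ such that for every $n\geq N_0$ there is an $(n,\varepsilon/2)$-spanning set $F_n$ for $K$ (with respect to $d_n^f$) with $\#F_n\leq e^{s'n}$. Every $x\in K$ satisfies $d_n(x,y)\leq\varepsilon/2<\varepsilon$ for some $y\in F_n$. Hence
\[
K\subset\bigcup_{y\in F_n}B_n(y,\varepsilon).
\]

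For every $N\geq N_0$, take the single-level cover at level $n=N$. This gives
\[
\mathcal M^{s}_{N,\varepsilon}(f,K)\leq \#F_N\,e^{-sN}\leq e^{-(s-s')N}.
\]
Letting $N\to\infty$ yields $\mathcal M^s_\varepsilon(f,K)=0$, so
\[
h^B_{\mathrm{top}}(f,K,\varepsilon)\leq s.
\]
Since $s$ was arbitrary above the limsup,
\[
h^B_{\mathrm{top}}(f,K,\varepsilon)\leq \limsup_{n\to\infty}\frac1n\log r_n^f(K,\varepsilon/2).
\]

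Finally, let $\varepsilon\to0$. The left side tends to $\hB(f,K)$ and the right side tends to $\huc(f,K)$. The right-hand limit is monotone in $\varepsilon$, so replacing $\varepsilon/2$ by $\varepsilon$ does not change it. Lemma~\ref{l1} then converts $\huc(f,K)$ into $\hE(\varphi,K)$.

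The only delicate points are bookkeeping. First, the spanning inequality in the definition is non-strict while Bowen balls are open; this is why I use radius $\varepsilon/2$. Second, the convention $d_n=\max_{0\le j<n}$ has to match in both notions, and it does. If the limsup is $+\infty$ there is nothing to prove. I do not expect a genuine obstacle; the main care is in the order of limits, which is ``$N\to\infty$ first, then $\varepsilon\to0$'' on both sides.
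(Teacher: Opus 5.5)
Your proof is correct and follows the same route as the paper: reduce via Lemma~\ref{l1} to the discrete-time inequality $\hB(f,K)\le\huc(f,K)$ for $f=\varphi_1$. The paper cites that inequality as Item (iii) of Proposition 2.1 in \cite{fh}. You instead prove it directly, covering $K$ at a single level $N$ by Bowen balls centered at a minimal $(N,\varepsilon/2)$-spanning set, which is the standard argument behind that citation.
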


\begin{proof}
By Item (iii) of Proposition 2.1 in \cite{fh} one has $h^B_{\mathrm{top}}(f,K)\leq \huc(f,K)$.
Recalling $f=\varphi_1$ we get the result from Lemma~\ref{l1}.
\end{proof}

\noindent
{\bf (P5)}
We have the following result.

\begin{lemma}
\label{l7}
The Bowen entropy of the nonsingular set equals the topological entropy
of the time-one map:
\[
\hB(f,X^*)=\htop(f).
\]
\end{lemma}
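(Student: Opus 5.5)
The two inequalities will be proved separately. The inequality $\hB(f,X^*)\le\htop(f)$ is immediate from (P1): $X^*\subset X$ gives $\hB(f,X^*)\le\hB(f,X)=\htop(f)$.

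For the reverse inequality we use the variational principle for $f=\varphi_1$ on the compact space $X$. It gives $\htop(f)=\sup_{\mu\in\cM_f^e(X)}h_\mu(f)$, the supremum over ergodic measures being allowed by the ergodic decomposition. It therefore suffices to show $h_\mu(f)\le\hB(f,X^*)$ for every $\mu\in\cM_f^e(X)$. By Lemma~\ref{l3}, either $\mu(\Sing(\varphi))=1$ or $\mu(\Sing(\varphi))=0$.

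\emph{Case $\mu(\Sing(\varphi))=1$.} Lemma~\ref{l4} gives $h_\mu(f)=0\le\hB(f,X^*)$, since Bowen entropy is nonnegative.

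\emph{Case $\mu(\Sing(\varphi))=0$.} Here $\mu(X^*)=1$. The set $X^*$ is open, hence Borel, so Lemma~\ref{l5} applies with $Z=X^*$ and yields $h_\mu(f)\le\hB(f,X^*)$.

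Taking the supremum over ergodic $\mu$ gives $\htop(f)\le\hB(f,X^*)$, and the lemma follows. (If $X^*=\varnothing$, every invariant measure lives on $\Sing(\varphi)$, so $\htop(f)=0=\hB(f,\varnothing)$ by the convention, which is consistent with the argument.)

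There is no genuine obstacle. The only point needing care is that the variational principle may be restricted to ergodic measures; this is standard, via the ergodic decomposition and the affinity of $\mu\mapsto h_\mu(f)$. The lemma requires no finiteness assumption on $\htop(f)$, since the argument never uses it.
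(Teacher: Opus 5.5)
Your proposal is correct and follows the paper's proof essentially step for step: the upper bound comes from monotonicity (P1), and the lower bound comes from the ergodic variational principle combined with the dichotomy of Lemma~\ref{l3}, with Lemma~\ref{l4} on $\Sing(\varphi)$ and Lemma~\ref{l5} applied to $Z=X^*$. Your side remarks, on the case $X^*=\varnothing$ and on restricting the variational principle to ergodic measures, are harmless points that the paper leaves implicit.
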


\begin{proof}
Since $X^*\subset X$, monotonicity gives
\[
\hB(f,X^*)\leq\hB(f,X)\overset{\mbox{\bf (P1)}}{=}\htop(f).
\]

For the reverse inequality, let $\mu\in\cM_f^e(X)$. By
Lemma~\ref{l3}, either
\[
\mu(\Sing(\varphi))=1
\quad\mbox{ or }\quad
\mu(X^*)=1.
\]

In the first case, Lemma~\ref{l4} gives
\[
h_\mu(f)=0\leq\hB(f,X^*).
\]
In the second case, $\mu(X^*)=1$. Therefore,
Lemma~\ref{l5} gives
\[
h_\mu(f)\leq\hB(f,X^*).
\]
Thus
\[
h_\mu(f)\leq\hB(f,X^*)
\]
for every $\mu\in\cM_f^e(X)$.

By the classical variational principle \cite{w},
\[
\htop(f)
=
\sup_{\mu\in\cM_f^e(X)}h_\mu(f).
\]
It follows that
\[
\htop(f)\leq\hB(f,X^*).
\]
Combining the two inequalities proves
\[
\hB(f,X^*)=\htop(f).
\]
\end{proof}

\section{Proof of the theorems}
\label{sec5}

\begin{proof}[Proof of Theorem \ref{thmA}]
We first prove the upper bound. For every compact set $K\subset X^*$,
Lemma~\ref{l2} gives
\[
\hE(\varphi,K)
\leq
\hE(\varphi,X)
=
e(\varphi).
\]
Therefore
\[
\sup_{\substack{K\subset X^*\\K\ \mathrm{compact}}}
\hE(\varphi,K)
\leq
e(\varphi).
\]

Conversely, 
set $f=\varphi_1$. By Lemma~\ref{l7},
\[
e(\varphi)=\htop(f)=\hB(f,X^*).
\]
Since $X^*$ is open, it is analytic. Since $\htop(f)=e(\varphi)<\infty$, the Feng--Huang compact
approximation theorem {\bf (P3)} therefore gives
\[
\hB(f,X^*)
=
\sup_{\substack{K\subset X^*\\K\ \mathrm{compact}}}
\hB(f,K).
\]
By Lemma~\ref{l6},
\[
\hB(f,K)\leq\hE(\varphi,K)
\]
for every compact $K\subset X^*$. Consequently,
\[
e(\varphi)=\hB(f,X^*)=
\sup_{\substack{K\subset X^*\\K\ \mathrm{compact}}}
\hB(f,K)\leq
\sup_{\substack{K\subset X^*\\K\ \mathrm{compact}}}
\hE(\varphi,K)
\]
completing the proof.
\end{proof}

\begin{proof}[Proof of Theorem \ref{thmB}]
Let $K\subset X$ be compact and meet every regular orbit.  Fix a
geometrically separating constant $\delta>0$.  Since $\varphi$ is uniformly $C_0$, choose
$\eta>0$ such that $\omega(\eta)<\delta/2$, and set
$\varepsilon=\delta/4$.  Then \eqref{eq:scale} holds.

For $T>0$, partition $(0,T]$ into
\[
 N_T=\left\lceil\frac{T}{\eta}\right\rceil
\]
half-open intervals of length at most $\eta$.  For each interval, select one
point in $K$ from every periodic orbit whose least period belongs to that
interval.  Lemma~\ref{lem:window} shows that the selected points form a
$(T,\varepsilon)$-separated subset of $K$.  Therefore, the number of periodic
orbits in each interval is at most $s_T^\phi(K,\varepsilon)$, and
\begin{equation}\label{eq:count}
 \nu(T)\le N_T s_T^\phi(K,\varepsilon)<\infty.
\end{equation}
Taking logarithms, dividing by $T$, and passing to the upper limit gives
\begin{align*}
 \limsup_{T\to\infty}\frac1T\log\nu(T)
 &\le \limsup_{T\to\infty}\frac1T\log N_T
      +\limsup_{T\to\infty}\frac1T\log s_T^\phi(K,\varepsilon).\\
 &\le \hE(\phi,K)\\
& \le \hE(\phi),
\end{align*}
because $T^{-1}\log N_T\to0$.  This completes the proof.
\end{proof}

\section{Applications}
\label{sec8}

\noindent
In this section, we give applications of our results according to the following subsections.

\subsection{Two corollaries}

We have two corollaries.

\begin{corollary}
Under the assumptions of Theorem~\ref{thmA}, for every
$\varepsilon>0$ there exists a compact set
\[
K_\varepsilon\subset X^*
\]
such that
\[
\hE(\varphi,K_\varepsilon)
>
e(\varphi)-\varepsilon.
\]
\end{corollary}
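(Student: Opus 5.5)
This corollary follows from Theorem~\ref{thmA} by the definition of the supremum. Under the assumptions of Theorem~\ref{thmA}, $X$ is a compact metric space and $e(\varphi)<\infty$. The theorem then gives
\[
 e(\varphi)=\sup_{\substack{K\subset X^*\\K\ \mathrm{compact}}}\hE(\varphi,K).
\]
Fix $\varepsilon>0$. Since $e(\varphi)$ is a finite real number, $e(\varphi)-\varepsilon$ is strictly smaller than the supremum on the right. So it cannot be an upper bound for the family $\{\hE(\varphi,K):K\subset X^*\ \text{compact}\}$. Hence some compact $K_\varepsilon\subset X^*$ satisfies $\hE(\varphi,K_\varepsilon)>e(\varphi)-\varepsilon$.

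Two points need checking, though neither is a real obstacle.

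First, the family over which the supremum is taken must be nonempty. If $X^*\neq\varnothing$, any single regular point is a compact subset of $X^*$. If $X^*=\varnothing$, the only compact subset is $K=\varnothing$. Its separated numbers are $s_T^\varphi(\varnothing,\varepsilon)=0$, so the convention for $\log 0$ must be fixed. The clean way is to note that in this case $f=\varphi_1=\id_X$, so $e(\varphi)=0$, and to adopt the convention (parallel to $\hB(f,\varnothing)=0$) that $\hE(\varphi,\varnothing)=0$. Then $K_\varepsilon=\varnothing$ works, since $0>-\varepsilon$.

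Second, finiteness of $e(\varphi)$ is essential. It is what makes $e(\varphi)-\varepsilon<e(\varphi)$ a strict inequality between real numbers. It is also a hypothesis of Theorem~\ref{thmA}, through the Feng--Huang approximation \textbf{(P3)}.

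With these remarks, the proof is a single invocation of Theorem~\ref{thmA} followed by the approximation property of suprema.
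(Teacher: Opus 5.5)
Your argument is correct and is the same as the paper's proof, which is the one-line remark that the corollary follows from the supremum in Theorem~\ref{thmA} because $e(\varphi)$ is finite. Your extra remark on the degenerate case $X^*=\varnothing$ is also sound: it exposes the convention $\hE(\varphi,\varnothing)=0$, which the paper leaves implicit and which Theorem~\ref{thmA} itself already needs in that case, but it does not change the argument.
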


\begin{proof}
This follows directly from the definition of the supremum in
Theorem~\ref{thmA}.
\end{proof}

\begin{corollary}
If $\varphi$ is a differentiable flow of a compact manifold,
then
\[
e(\varphi)
=
\sup_{\substack{K\subset X^*\\K\ \mathrm{compact}}}
\hE(\varphi,K).
\]
\end{corollary}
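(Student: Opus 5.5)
The plan is to reduce the corollary to Theorem~\ref{thmA}. That theorem needs only two hypotheses: $X$ is a compact metric space, and $e(\varphi)<\infty$. The first holds because a compact manifold, with any Riemannian distance, is a compact metric space. A differentiable flow is in particular continuous, so $\varphi$ is a flow in the sense of the Introduction. Everything therefore comes down to showing $e(\varphi)=\htop(\varphi_1)<\infty$.

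To prove finiteness, I would use that the time-one map $f=\varphi_1$ of a $C^1$ flow on a compact manifold $M$ of dimension $m$ is a $C^1$ diffeomorphism. Compactness gives the finite constant
\[
L:=\max_{x\in M}\|Df(x)\|<\infty,
\]
so $f$ is Lipschitz with constant $L$ for the Riemannian distance. The classical bound for Lipschitz maps on compact manifolds (or, more generally, on compact metric spaces of finite upper box dimension) then gives
\[
\htop(f)\le m\,\max\{0,\log L\}<\infty.
\]

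I would sketch this bound directly as follows. Whenever $d(x,y)\le \varepsilon L^{-n}$, we have $d_n^f(x,y)\le\varepsilon$. Hence any $\varepsilon L^{-n}$-spanning set for $d$ is $(n,\varepsilon)$-spanning for $f$. A compact $m$-manifold can be covered by $C\,r^{-m}$ balls of radius $r$, so
\[
r_n^f(M,\varepsilon)\le C(\varepsilon L^{-n})^{-m}.
\]
Taking $\frac1n\log$ and letting $n\to\infty$ gives the bound $m\log L$. Combined with Lemma~\ref{l1}, this yields $e(\varphi)<\infty$.

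With both hypotheses checked, Theorem~\ref{thmA} applies verbatim and gives the stated identity. The only step requiring care is the Lipschitz estimate. It uses that $Df$ is continuous, which is where differentiability enters; for a merely continuous flow the entropy could be infinite. It also uses the covering-number estimate for a compact manifold, which is routine via finitely many charts. No other obstacle is expected.
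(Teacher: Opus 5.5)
Your proposal is correct and is essentially the paper's argument. The paper also reduces the corollary to Theorem~\ref{thmA} by noting that differentiable flows on compact manifolds have finite entropy, citing Walters' Theorem~7.15 for the Lipschitz bound $\htop(\varphi_1)\le\max\{0,m\log L\}$ that you sketch directly; your appeal to Lemma~\ref{l1} is harmless but unnecessary, since $e(\varphi)=\htop(\varphi_1)$ by definition.
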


\begin{proof}
Every diﬀerentiable flow on a compact manifold has finite topological entropy (see Theorem 7.15 in \cite{w}). Then, the result follows from Theorem \ref{thmA}.
\end{proof}

\subsection{Geometric separation outside singularities}
Let $\phi$ be a flow of a compact metric space $X$.
A subset $\Lambda\subset X$ is {\em dynamically isolated} if there is a neighborhood $U$ of $\Lambda$ such that
\begin{equation}
\label{beixo}
\Lambda=\bigcap_{t\in\mathbb{R}}\phi_t(U).
\end{equation}

By using Theorem \ref{thmB} we obtain the following corollary.

\begin{corollary}
\label{thm:A}
Let $\phi$ be a flow on a compact metric space $X$. Suppose that
$\phi$ is geometrically separating on
\[
X^*=X\setminus\Sing(\phi)
\]
and that $\Sing(\phi)$ is dynamically isolated. Then $\phi$ satisfies
the Bowen--Walters inequality.
\end{corollary}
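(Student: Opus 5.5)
The plan is to apply Theorem~\ref{thmB} to the restricted flow $\phi^*:=\phi|_{X^*}$ on the (possibly noncompact) metric space $X^*$ with the metric inherited from $X$. Periodic orbits of $\phi$ have positive least period, so they lie in $X^*$; hence the counting function $\nu(T)$ of $\phi$ equals that of $\phi^*$. We therefore need to verify the three hypotheses of Theorem~\ref{thmB} for $\phi^*$, and then compare $\hE(\phi^*)$ with $e(\phi)$.

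\textbf{Step 1 (hypotheses).} Geometric separation of $\phi^*$ is the assumption ``geometrically separating on $X^*$''. The uniformly $C_0$ property follows from compactness of $X$. The map $(x,t)\mapsto\phi_t(x)$ is uniformly continuous on $X\times[-1,1]$, so $\omega(r)\to0$ as $r\to0$ already for $\phi$, and a fortiori for the restriction. Dynamical isolation at infinity is the main point. Take $U$ as in \eqref{beixo} with $\Lambda=\Sing(\phi)$, and shrink it to an open neighborhood $V$ with $\overline V\subset U$ if necessary. We set $K:=X\setminus V$. This set is closed in the compact space $X$, hence compact, and it is contained in $X^*$ because $\Sing(\phi)\subset V$. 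Now let $x\in X^*$ be a regular point. If its orbit never met $K$, then $\phi_t(x)\in V\subset U$ for all $t$, so $x\in\bigcap_t\phi_t(U)=\Sing(\phi)$, a contradiction. Hence $K$ meets every regular orbit.

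\textbf{Step 2 (entropy comparison).} Theorem~\ref{thmB} yields
\[
\limsup_{T\to\infty}\frac1T\log\nu(T)\le \hE(\phi^*)=\sup_{K\subset X^*\ \mathrm{compact}}\hE(\phi^*,K).
\]
For a compact set $K\subset X^*$, the separated sets are subsets of $K$, and the metric $d_T$ is computed along orbits that stay inside $X^*$. Thus $\hE(\phi^*,K)=\hE(\phi,K)$, and Lemma~\ref{l2} gives $\hE(\phi,K)\le e(\phi)$. So the right-hand side is at most $e(\phi)$, which is exactly the Bowen--Walters inequality. When $e(\phi)<\infty$, Theorem~\ref{thmA} shows this bound is the natural one, although we only need the inequality.

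\textbf{Main obstacle.} I expect the only delicate point to be Step~1's isolation argument, in particular making sure $K$ is compact and disjoint from $\Sing(\phi)$. Taking $K=X\setminus V$ for an open $V\supset\Sing(\phi)$ inside $U$ handles this; one can simply take $V=U$ if $U$ is open. A minor point to check is that $\nu(T)$ might be infinite. This does not cause trouble, since the proof of Theorem~\ref{thmB} shows via \eqref{eq:count} that $\nu(T)$ is finite.
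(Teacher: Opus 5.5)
Your proposal is correct and follows essentially the same route as the paper's proof: restrict to $X^*$, get the uniformly $C_0$ property from compactness of $X$, take $K=X\setminus U$ as a compact subset of $X^*$ meeting every regular orbit, apply Theorem~\ref{thmB}, and bound $\hE(\phi,K)\le e(\phi)$ for compact $K\subset X^*$ by monotonicity. Your passage to an open $V\subset U$ makes the compactness of $K$ explicit where the paper tacitly takes $U$ open, and you also note that periodic orbits lie in $X^*$, so $\nu(T)$ is unchanged by the restriction.
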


\begin{proof}
Since $X$ is compact, the restricted flow $\varphi=\phi|_{X^*}$ is uniformly
$C_0$. Choose $U$ satisfying \eqref{beixo} for $\Lambda=\mathrm{Sing}(\phi)$. Then,
\[
K:=X\setminus U
\]
is a compact subset of $X^*$ meeting every regular orbit. Indeed, if the
orbit of some $x\in X^*$ did not meet $K$, then
\[
\phi_{\mathbb R}(x)\subset U,
\]
and the dynamical isolation would imply that
$x\in\Sing(\phi)$, a contradiction. Thus $\varphi$ is dynamically
isolated at infinity.

Since $\phi$ is geometrically separating, so does $\varphi$.
Then, Theorem \ref{thmB} gives
\[
\limsup_{T\to\infty}\frac{1}{T}\log\nu(T)
\leq \hE(\phi)=\sup_{K\subset X^*\  \mathrm{compact}}\hE(\phi,K).
\]
For every compact set $K\subset X^*$,
\[
\hE(\phi,K)\leq e(\phi),
\]
because every separated subset of $K$ is also a separated subset of $X$.
Taking the supremum over all compact subsets $K\subset X^*$ proves
\[
\limsup_{T\to\infty}\frac{1}{T}\log\nu(T)
\leq e(\phi).
\]
This completes the proof.
\end{proof}

In particular, the Bowen-Walters inequality holds
for every geometrically separating flow with dynamically isolated singular set of a compact metric space.

\subsection{Singular suspensions}
We recall the differentiable construction of a singular suspension, following
the approach used by Komuro \cite{k}. Let $M$ be a compact manifold and let
$f\colon M\to M$ be a diffeomorphism. The suspension manifold of $f$ is
\[
M_f
:=
(M\times[0,1])/\!\sim,
\]
where
\[
(x,1)\sim(f(x),0)
\qquad\text{for every }x\in M.
\]
We denote the equivalence class of $(x,r)$ by $[x,r]$. The ordinary
suspension flow is
\[
\psi_t([x,r])=[x,r+t],
\]
where the equivalence relation is used when the second coordinate crosses
$0$ or $1$. Let $V$ be the smooth nonsingular vector field on $M_f$
generating $\psi$.

Fix a point
\[
\sigma=[a,r_0]\in M_f,
\qquad a\in M,\quad 0<r_0<1,
\]
and choose a smooth function
\[
\alpha\colon M_f\to[0,\infty)
\]
such that
\[
\alpha^{-1}(0)=\{\sigma\}.
\]
The vector field
\[
W:=\alpha V
\]
is complete because $M_f$ is compact. The flow $\phi$ generated by $W$
is called a \emph{singular suspension} of $f$, and $\alpha$ is called a
\emph{brake function}. Its singular set is
\[
\Sing(\phi)=\{\sigma\}.
\]

On $M_f\setminus\{\sigma\}$, the vector fields $V$ and $W$ are positively
collinear. Consequently, the ordinary suspension flow $\psi$ and the
singular suspension flow $\varphi$ have the same oriented orbit segments
away from $\sigma$, although they have different time parametrizations.
The ordinary suspension orbit through $\sigma$ is divided into regular
$\phi$-orbits which approach $\sigma$ asymptotically.

We also recall that a homeomorphism $f\colon M\to M$ is called
\emph{expansive} if there exists $c>0$ such that
\[
d\bigl(f^n(x),f^n(y)\bigr)<c
\qquad\text{for every }n\in\mathbb Z
\]
implies
\[
x=y.
\]
Such a number $c$ is called an \emph{expansivity constant} for $f$.

Let $\Lambda$ be a compact invariant set of a flow
$\phi$.
The restricted flow
$\phi|_\Lambda$ is called \emph{$k^*$-expansive} if, for every
$\varepsilon>0$, there exists $\delta>0$ such that, whenever
$x,y\in\Lambda$ and an increasing homeomorphism $s:\mathbb{R}\to\mathbb{R}$ fixing $0$ satisfy
\[
d\bigl(\phi_t(x),\phi_{s(t)}(y)\bigr)<\delta
\qquad\text{for every }t\in\mathbb R,
\]
there exists $t_0\in\mathbb R$ for which
\[
\phi_{s(t_0)}(y)
   \in
\phi_{[-\varepsilon,\varepsilon]}
       \bigl(\phi_{t_0}(x)\bigr).
\]
This definition is adapted to flows with singularities and was
introduced by Komuro \cite{k1} in his study of Lorenz attractors. In particular,
$k^*$-expansivity implies geometric separation: fixing any
$\varepsilon>0$ and taking the corresponding $\delta$, the preceding
conclusion implies
\[
y\in\phi_{\mathbb R}(x).
\]
Consequently, every $k^*$-expansive flow is geometrically separating,
and hence geometrically separating on its regular points.
We can now state the following consequence for singular suspensions of expansive diffeomorphisms.

\begin{corollary}
The singular suspension $\phi$ of an expansive homeomorphism of a compact manifold satisfies the Bowen-Walters inequality
\[
\limsup_{T\to\infty}\frac{1}{T}\log\nu(T)
\leq e(\phi).
\]
\end{corollary}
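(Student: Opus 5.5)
The plan is to derive the statement from Corollary~\ref{thm:A}. Two hypotheses must be checked for the singular suspension $\phi$ of an expansive homeomorphism $f$ with brake function $\alpha$ vanishing only at $\sigma$. First, $\Sing(\phi)=\{\sigma\}$ must be dynamically isolated. Second, $\phi$ must be geometrically separating on $X^*=M_f\setminus\{\sigma\}$. Compactness of $M_f$ then supplies the uniform $C_0$ property, and Corollary~\ref{thm:A} yields $\limsup_{T\to\infty}T^{-1}\log\nu(T)\le e(\phi)$.

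\emph{Dynamical isolation.} Take $U$ to be a small flow box neighbourhood of $\sigma=[a,r_0]$ of the form $\{[x,r]: x\in B(a,\rho),\ |r-r_0|<\rho\}$, with $\rho<\min(r_0,1-r_0)$. Any $\phi$-orbit other than $\sigma$ is a reparametrised piece of a $\psi$-orbit. Such an orbit can stay in $U$ for all time only if it is the piece of the $\psi$-orbit of $[a,r_0]$ entering or leaving $\sigma$. I would remove even these by noting that they leave $U$ in backward or forward time: the stable branch $\{[a,r]: r<r_0\}$ exits $U$ backwards. Hence $\bigcap_t\phi_t(U)=\{\sigma\}$.

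\emph{Geometric separation on $X^*$.} This is the main step. Suppose $x,y\in X^*$ and $s$ is an increasing homeomorphism with $s(0)=0$ and $d(\phi_t(x),\phi_{s(t)}(y))<\delta$ for all $t\in\R$, where $\delta$ is small compared with the expansivity constant $c$ of $f$. Since $\phi$ and $\psi$ have the same oriented orbits off $\sigma$, I would rewrite the orbit of $x$ in $\psi$-time. Each time it crosses the cross-section $\Sigma=\{[z,0]\}$, the orbit of $y$ is simultaneously $\delta$-close to it. Because $\sigma$ lies at height $r_0\neq 0$, this forces $y$'s orbit to cross $\Sigma$ at a nearby point.

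This matching gives a sequence of crossings $f^n(x')$ and $f^{n+k}(y')$, for some fixed integer shift $k$, with $d(f^n(x'),f^{n+k}(y'))<c$. Orientation preservation and monotonicity of $s$ make the matching of crossings order-preserving, so the shift $k$ is constant. Expansivity then gives $x'=f^k(y')$, so $y\in\psi_{\R}(x)$.

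The genuine obstacle is the case where the $\psi$-orbit passes through $\sigma$. That orbit splits into several $\phi$-orbits, and $x$ or $y$ may take infinite $\phi$-time to reach $\sigma$, so it has only half-infinitely many crossings. To handle this I would argue as follows.
\begin{itemize}
\item Two points on the same $\psi$-orbit but in different $\phi$-orbits are separated by $\sigma$.
\item If $x$ tends to $\sigma$ in forward time while $y$ lies on the other side, then $y$ must travel at least a distance bounded below by $\rho$ to leave $U$ while $\phi_t(x)$ stays near $\sigma$.
\item This contradicts $\delta$-closeness once $\delta<\rho/2$.
\end{itemize}
Orbits that avoid $\sigma$ have bi-infinite crossing sequences, and the argument above applies to them directly. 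With both hypotheses verified, Corollary~\ref{thm:A} applies.
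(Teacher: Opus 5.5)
Your argument is correct in substance, but it takes a different route from the paper.

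\textbf{How the two routes differ.} The paper does not check geometric separation by hand. It quotes the external result that the singular suspension of an expansive homeomorphism is $k^*$-expansive, and notes that $k^*$-expansivity implies geometric separation. For isolation of $\sigma$ it points to a compact global cross-section, disjoint from $\sigma$, that every regular orbit meets. That set is really the compact subset of $X^*$ that the proof of Corollary~\ref{thm:A} needs. You instead verify the literal definition $\{\sigma\}=\bigcap_t\phi_t(U)$ with a flow box. You then prove geometric separation on $X^*$ directly from expansivity of $f$, using crossings of $\Sigma$: this is a Bowen--Walters suspension argument plus a local analysis at $\sigma$. The paper's proof is two lines but rests on a citation of a stronger property. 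Yours is self-contained, uses only expansivity of $f$ and $r_0\neq 0$, and proves exactly the weaker property that Corollary~\ref{thm:A} requires.

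\textbf{First point to tighten: the constant shift.} Order preservation of the matching does not by itself make the shift $k$ constant. You must show that between two consecutive crossings of the orbit of $x$, the orbit of $y$ crosses $\Sigma$ exactly once. This follows by comparing the height coordinates of $\phi_t(x)$ and $\phi_{s(t)}(y)$ while $\phi_t(x)$ sweeps a full fundamental domain. Alternatively, suppose neither $\psi$-orbit meets $\sigma$, and let $\tau_x,\tau_y$ convert $\phi$-time to $\psi$-time. Then $\tau_y\circ s\circ\tau_x^{-1}$ is an increasing homeomorphism of $\R$ fixing $0$, and the classical expansivity of the suspension $\psi$ finishes the argument.

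\textbf{Second point to tighten: the mixed case.} Your bullets only treat $x,y$ on the same $\psi$-orbit. You must also exclude the case where, say, $\phi_t(x)\to\sigma$ but $y$ is not on the $\psi$-orbit of $\sigma$. Your flow-box observation handles this: the forward orbit of $y$ would eventually stay in $U$, which forces $y$ onto the local stable branch. The two-sided crossing correspondence (applied also with $s^{-1}$) handles it too. It shows that $x$ and $y$ have crossing sets of the same type: bi-infinite, forward-only, backward-only, or finite. Each non-bi-infinite type is realized by a single regular $\phi$-orbit: the outgoing branch, the incoming branch, or the homoclinic loop when $a$ is periodic.
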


\begin{proof}
The singular suspension of every expansive homeomorphism is
$k^*$-expansive \cite{s} and hence geometrically separating on its regular part.
Moreover, its unique singularity is dynamically isolated. Indeed, one can
choose a compact global cross-section disjoint from the singularity, and
every regular orbit meets this cross-section in forward or backward time.
The result therefore follows from Corollary~\ref{thm:A}.
\end{proof}

\begin{remark}
The preceding corollary does not follow directly from the
Bowen--Walters inequality for the expansive homeomorphism defining the
singular suspension. Indeed, the latter estimates periodic points in
terms of their discrete periods, whereas $\nu(T)$ counts periodic
orbits of the suspension according to their flow periods. Since the
braking function vanishes at the singularity, these two notions of
period need not be uniformly comparable.

This corollary is complementary to the results of Rego and Romaña
\cite{rr}. They study the preservation of positive entropy under
singular suspension. In particular, they prove that every expansive
$C^1$ flow on a closed three-dimensional manifold which is a singular
suspension of a diffeomorphism has positive topological entropy.
Moreover, they prove that if the base diffeomorphism is expansive and
the braking function has finitely many zeros, then the corresponding
singular suspension is $k^*$-expansive. Consequently, such a flow is geometrically separating and our result yields the Bowen-Walters inequality for such flows.

Thus, in the three-dimensional setting considered in \cite{rr}, the
right-hand side in Bowen-Walters's is additionally known to be positive. Their results
and ours address complementary questions: Rego and Romaña establish
positivity of the entropy, while the present result bounds the
exponential growth of periodic orbits from above by that entropy.
\end{remark}

\subsection{Multi-singular hyperbolic sets}
\label{subsec:multisingular}

Multi-singular hyperbolicity was introduced by Bonatti and da Luz
\cite{bdl} as a framework for describing chain-recurrence classes
containing singularities of different indices. It extends the
three-dimensional theory of singular hyperbolicity and occurs naturally
in the study of star flows.

A recent theorem of Pacifico et al \cite{pyyz} states
that every multi-singular hyperbolic set is robustly $k^*$-expansive. Since $k^*$-expansivity implies geometric
separation, their theorem provides the following application of our
Bowen--Walters inequality.

\begin{corollary}[Multi-singular hyperbolic sets]
\label{cor:multisingular}
Let $\Lambda$ be a multi-singular hyperbolic set of a $C^1$ flow $\phi$ on a compact Riemannian manifold. Then
\[
\limsup_{T\to\infty}
\frac{1}{T}\log\nu_\Lambda(T)
\leq e(\phi|_\Lambda),
\]
where $\nu_\Lambda(T)$ denotes the number of periodic orbits contained
in $\Lambda$ whose period is at most $T$.
\end{corollary}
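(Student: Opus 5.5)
The plan is to apply Theorem~\ref{thmB} to the restricted flow on the regular part of $\Lambda$, along the lines of Corollary~\ref{thm:A}.

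\textbf{Step 1: the flow $\phi|_\Lambda$.} Since $\Lambda$ is a compact invariant set, $\phi|_\Lambda$ is a flow on a compact metric space. By the theorem of Pacifico et al.\ \cite{pyyz}, $\phi|_\Lambda$ is $k^*$-expansive. As observed before the singular suspension corollary, $k^*$-expansivity implies geometric separation, so $\phi|_\Lambda$ is geometrically separating. Its restriction $\varphi$ to $\Lambda^*=\Lambda\setminus\Sing(\phi|_\Lambda)$ is therefore also geometrically separating. It is uniformly $C_0$ because $\Lambda$ is compact.

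\textbf{Step 2: dynamical isolation at infinity (the main obstacle).} We need a compact set $K\subset\Lambda^*$ that meets every regular orbit in $\Lambda$. By Corollary~\ref{thm:A}, it suffices that $\Sing(\phi|_\Lambda)$ be dynamically isolated in $\Lambda$.

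For a $C^1$ flow, the singularities in a multi-singular hyperbolic set are hyperbolic, so they are finitely many and isolated. The difficulty is that a hyperbolic singularity is generally not dynamically isolated in the sense of \eqref{beixo}: stable and unstable manifold orbits stay in every neighborhood $U$ for all positive or negative times only, not both. So the condition is only plausible in the two-sided sense required there. Suppose a regular orbit in $\Lambda$ stays in a small neighborhood $U$ of $\Sing$ for all $t\in\mathbb R$. By hyperbolicity, a full orbit staying in a small isolating block around a hyperbolic singularity $\sigma$ must be $\sigma$ itself, since the maximal invariant set of a small neighborhood of a hyperbolic fixed point is the point. With finitely many singularities, take $U$ to be a disjoint union of such isolating neighborhoods. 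Any orbit in $U$ for all time lies in one component, hence is singular. So $\Sing$ is dynamically isolated, and $K=\Lambda\setminus U$ works.

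If the paper's notion of multi-singular hyperbolicity does not directly give hyperbolicity of the singularities, I would instead argue from $k^*$-expansivity. An orbit staying $\delta$-close to a singularity $\sigma$ for all time is $\delta$-shadowed by $\sigma$ via $s=\id$, so geometric separation forces it to equal $\sigma$. Taking $U$ to be the $\delta$-neighborhoods of the finitely many singularities then gives isolation, provided the singularities are finite in number. Actually this second argument works directly: if $\phi_t(x)\in B(\sigma,\delta)$ for all $t$, then $d(\phi_t(x),\phi_t(\sigma))<\delta$, so $\sigma\in\phi_{\mathbb R}(x)$ and $x=\sigma$. Finiteness of $\Sing$ within $\Lambda$ follows from the same property, since distinct singularities are at distance at least $\delta$, together with compactness.

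\textbf{Step 3: conclusion.} Theorem~\ref{thmB} applied to $\varphi$, followed by the argument at the end of Corollary~\ref{thm:A}, gives the bound
\[
\limsup_{T\to\infty}\frac1T\log\nu_\Lambda(T)\le \sup_{K\subset\Lambda^*\ \mathrm{compact}}\hE(\phi,K)\le e(\phi|_\Lambda).
\]
Here every periodic orbit in $\Lambda$ is regular, so it is counted by $\nu$ for $\varphi$. The last inequality holds because separated subsets of $K$ are separated subsets of $\Lambda$ (Lemma~\ref{l2}).
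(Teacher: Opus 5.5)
Your proof is correct, and its overall structure matches the paper's. The paper also uses \cite{pyyz} to get $k^*$-expansivity of $\phi|_\Lambda$, passes to geometric separation, gets uniform $C_0$ from compactness, and applies Corollary~\ref{thm:A}. For the isolation step it uses exactly your first argument: singularities in a multisingular-hyperbolic set are hyperbolic, hence finitely many, and each is an isolated invariant set.

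Your second argument for Step~2 is a genuinely different and more self-contained route. It derives both the finiteness of $\Lambda\cap\Sing(\phi)$ and the dynamical isolation from the geometric separating constant alone. So it does not need hyperbolicity of the singularities, which the paper simply asserts. It also shows that the isolation hypothesis of Corollary~\ref{thm:A} holds automatically whenever the flow is geometrically separating on all of $X$, singular points included, as $k^*$-expansive flows are.

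That argument needs one small fix. You take $U$ to be the union of the $\delta$-balls, but these balls may overlap, since you only know that distinct singularities are at distance $\ge\delta$. An orbit contained in $U$ could then pass from near one singularity to near another, and you could not compare it with a single $\sigma$ via $s=\id$. Use the $\delta/2$-balls instead. They are pairwise disjoint, and the orbit is connected, so it lies in a single ball $B(\sigma,\delta/2)$; your argument then goes through.

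Your remark that a hyperbolic singularity is ``generally not dynamically isolated in the sense of \eqref{beixo}'' is mistaken. Condition \eqref{beixo} is exactly the two-sided condition, and hyperbolic singularities satisfy it, so the obstacle you describe does not arise.
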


\begin{proof}
By the aforementioned result in
\cite{pyyz}, the restricted flow $\phi|_\Lambda$ is $k^*$-expansive. It follows that $\phi|_\Lambda$ is geometrically
separating and, in particular, geometrically separating on
$
\Lambda_{\mathrm{reg}}
:=
\Lambda\setminus\Sing(\phi).
$
Every singularity contained in a multisingular-hyperbolic set is hyperbolic. Since $\Lambda$ is compact, the set
$
\Lambda\cap\Sing(\phi)
$
is finite. Each of its elements is an isolated invariant set.
Consequently, their finite union is dynamically isolated for the
restricted flow $\phi|_\Lambda$.
Finally, compactness of $\Lambda$ and continuity of the flow imply that $\phi|_\Lambda$ is uniformly $C_0$. All the hypotheses of
Corollary~\ref{thm:A} are satisfied, and the claimed
inequality follows.
\end{proof}

\begin{remark}
This corollary extends the upper-bound part of
the periodic-orbit results in Theorem B of
\cite{pyyz} to arbitrary multisingular-hyperbolic sets. Indeed, their
sharp counting theorem assumes that $\Lambda$ is a positive-entropy
multisingular-hyperbolic chain-recurrence class whose periodic orbits
are homoclinically related. Under these assumptions, they prove
\[
\nu_\Lambda(T)\asymp
\frac{\exp\bigl(T e(\phi|_\Lambda)\bigr)}{T},
\]
together with equidistribution of the corresponding periodic-orbit
measures.

Our conclusion is weaker than this sharp asymptotic estimate, but it
requires neither chain recurrence, homoclinic relatedness, genericity,
nor positivity of the entropy. In particular, it applies to every
multisingular-hyperbolic set and implies that, when
$e(\phi|_\Lambda)=0$, its periodic orbits have at most subexponential
growth.
\end{remark}

The sectional-hyperbolic attractors \cite{mm} form an important
special case. We therefore obtain the following consequence.

\begin{corollary}[Sectional-hyperbolic attractors]
\label{cor:singular-hyperbolic}
Let $\Lambda$ be a sectional-hyperbolic attractor of a $C^1$ vector
field on a compact three-dimensional Riemannian manifold. Then
\[
\limsup_{T\to\infty}
\frac{1}{T}\log\nu_\Lambda(T)
\leq e(\phi|_\Lambda).
\]
In particular, this conclusion applies to geometric Lorenz
attractors \cite{gw}.
\end{corollary}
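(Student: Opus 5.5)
The plan is to deduce Corollary~\ref{cor:singular-hyperbolic} from Corollary~\ref{cor:multisingular}. On a three-dimensional manifold, a sectional-hyperbolic attractor $\Lambda$ is a compact invariant set. The first step is to check that $\Lambda$ is multi-singular hyperbolic in the sense of Bonatti--da Luz \cite{bdl}. This is the standard inclusion noted in \cite{bdl}: singular hyperbolicity, which in dimension three coincides with sectional hyperbolicity \cite{mm}, is the special case of multi-singular hyperbolicity in which all singularities in $\Lambda$ are Lorenz-like of the same index. We take the reparametrizing cocycle to be trivial on the singular-hyperbolic splitting $E^s\oplus E^{cu}$. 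The extended linear Poincaré flow then inherits the dominated hyperbolic splitting from the volume expansion of $E^{cu}$ and the contraction of $E^s$.

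Once this inclusion is in place, Corollary~\ref{cor:multisingular} applies directly to $\Lambda$ and gives the displayed inequality. Geometric Lorenz attractors \cite{gw} are sectional-hyperbolic attractors in dimension three, so the final sentence of the statement follows.

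If one prefers to avoid the Bonatti--da Luz framework, a self-contained alternative is to verify the hypotheses of Corollary~\ref{thm:A} for $\phi|_\Lambda$ directly.
\begin{itemize}
\item The singularities in $\Lambda$ are hyperbolic, hence finite in number and each an isolated invariant set. Their finite union is therefore dynamically isolated.
\item Uniform $C_0$ control is automatic by compactness of $\Lambda$.
\item Geometric separation follows because sectional-hyperbolic sets are $k^*$-expansive. This is a known result for singular-hyperbolic attractors in dimension three, going back to Komuro for Lorenz attractors \cite{k1} and extended by Araujo, Pacifico and collaborators. As observed before the corollary on singular suspensions, $k^*$-expansivity implies geometric separation.
\end{itemize}

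The main obstacle is the geometric separation step, that is, the inclusion of sectional hyperbolicity into a setting where $k^*$-expansivity is known. I would handle it by citing the multi-singular hyperbolic inclusion together with \cite{pyyz}, as in the primary route, rather than reproving expansivity. The remaining hypotheses are routine verifications.
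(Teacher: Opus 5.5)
Your primary route is the paper's: its proof is just the inclusion ``sectional-hyperbolic attractor $\Rightarrow$ multisingular hyperbolic'' followed by Corollary~\ref{cor:multisingular}. As a citation of \cite{bdl}, your argument is fine.

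The one thing to fix is the justification you sketch for that inclusion: the reparametrizing cocycle cannot be taken trivial.
\begin{itemize}
\item At a Lorenz-like singularity $\sigma\in\Lambda$ with eigenvalues $\lambda_{ss}<\lambda_s<0<\lambda_u$, the direction $E^u_\sigma$ lies in the closure of the lifted regular orbits, because $W^u(\sigma)\subset\Lambda$.
\item Over that lifted point the extended linear Poincar\'e flow acts on $T_\sigma M/E^u_\sigma$. The component coming from $E^{cu}_\sigma$ is the weak stable direction, which is contracted at rate $\lambda_s<0$.
\item Correspondingly, for regular $x$ the linear Poincar\'e flow on the normal part of $E^{cu}_x$ equals $|\det D\phi_t|_{E^{cu}_x}|$ divided by the ratio of flow speeds at $\phi_t(x)$ and $x$. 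It therefore fails to expand, and in fact contracts for long stretches, along orbit segments leaving a neighbourhood of $\sigma$.
\end{itemize}
The Bonatti--da Luz cocycle, essentially this speed ratio near the singularities, is introduced precisely to cancel that factor. After reparametrization the rate at $(\sigma,E^u_\sigma)$ becomes $\lambda_s+\lambda_u>0$, which is where volume expansion of $E^{cu}$ actually enters. So drop the ``trivial cocycle'' remark and rely on the cited inclusion.

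Your alternative route is also valid. $k^*$-expansivity (in Komuro's sense) of singular-hyperbolic attractors of $3$-flows is a theorem of Araujo, Pacifico, Pujals and Viana, and together with the routine hypotheses it feeds directly into Corollary~\ref{thm:A}. This amounts to the proof of Corollary~\ref{cor:multisingular} with the expansivity input taken from the three-dimensional theory rather than from \cite{pyyz}, and it bypasses the multisingular framework entirely.
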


\begin{proof}
Every sectional-hyperbolic attractor is
multisingular hyperbolic. The result follows immediately from
Corollary~\ref{cor:multisingular}.
\end{proof}

\subsection{Asymptotic sectional-hyperbolicity}
Let $G$ be a $C^1$ vector field on a compact Riemannian manifold $M$,
let $\phi=(\phi_t)_{t\in\mathbb R}$ be its flow, and let $\Lambda$ be
a compact invariant partially hyperbolic set with splitting
\[
T_\Lambda M=E^s\oplus E^{cu}.
\]
Assume that every singularity of $G$ contained in $\Lambda$ is
hyperbolic. Following \cite{ms}, the set $\Lambda$
is called \emph{asymptotically sectional-hyperbolic}, or \emph{ASH},
if there exists $c>0$ such that, for every
\[
x\in
\Lambda\setminus
\bigcup_{\sigma\in\Sing(\phi)\cap\Lambda}W^s(\sigma)
\]
and every two-dimensional subspace $L_x\subset E^{cu}_x$, one has
\[
\limsup_{t\to+\infty}
\frac{1}{t}
\log
\left|
\det\left(D\phi_t(x)|_{L_x}\right)
\right|
\geq c.
\]
Thus, unlike sectional hyperbolicity, the ASH condition does not
require uniform sectional expansion at every positive time. It
requires only asymptotic sectional expansion and consequently provides
arbitrarily large hyperbolic times for points outside the stable
manifolds of the singularities.

Every sectional-hyperbolic set is ASH, but the converse does not hold.
The principal example is the Rovella attractor. San Mart\'{i}n and Vivas \cite{sv}
proved that the Rovella attractor is ASH \cite{sv1}, whereas it
is not sectional-hyperbolic because its singularity is of Rovella
type. General three-dimensional ASH attractors are known to be
rescaling expansive \cite{rv}, but rescaling expansivity alone does not imply
geometric separation. Nevertheless, Carrasco-Olivera and San Martín
proved directly that the Rovella attractor \cite{k} is $k^*$-expansive
\cite{csm}. Since $k^*$-expansivity implies geometric separation, the
Rovella attractor provides an application of our Bowen--Walters
inequality.

\section*{Declaration of competing interest}

\noindent
There is no competing interest.

\section*{Data availability}

\noindent
No data was used for the research described in the article.

% Insert the compact singular corollary, followed by the examples involving
% singular suspensions, sectional-hyperbolic attractors, and Rovella attractors.

\end{document}